\definecolor{burgundy}{rgb}{0.5,0.0, 0.13}
\newtheorem{theorem}{Theorem}[section]
\newtheorem{proposition}[theorem]{Proposition}
\newtheorem{lemma}[theorem]{Lemma}
\newtheorem{corollary}[theorem]{Corollary}
\theoremstyle{definition}
\theoremstyle{remark}
\newtheorem{remark}[theorem]{Remark}
\newcommand{\jz}[1]{}
\newcommand{\gd}[1]{}
\newcommand{\ml}[1]{}
\newcommand{\cA}{\mathcal{A}}
\def\d{{\mathrm d}}
\def\F{{\mathcal F}}
\def\E{{\mathbb E}}
\newcommand{\cP}{\mathcal{P}}
\newcommand{\cW}{\mathcal{W}}
\newcommand{\cX}{\mathcal{X}}
\newcommand{\EE}{\mathbb{E}}
\newcommand{\RR}{\mathbb{R}}
\newcommand{\VV}{\mathbb{V}}
\title{Deep Learning for Population-Dependent Controls \\
            in Mean Field Control Problems with Common Noise}
\begin{document}
 \author{
G\"{o}k\c{c}e Dayan{\i}kl{\i} \thanks{Department of Statistics, University of Illinois at Urbana-Champaign, Champaign, IL 61820, USA. Email: gokced@illinois.edu}
\and Mathieu Lauri\`ere \thanks{Shanghai Frontiers Science Center of Artificial Intelligence and Deep Learning; NYU-ECNU Institute of Mathematical Sciences at NYU Shanghai; NYU Shanghai, 567 West Yangsi Road, Shanghai, 200126, People’s Republic of China. Email: mathieu.lauriere@nyu.edu.}
\and Jiacheng Zhang
 \thanks{Department of Industrial Engineering and Operations Research, University of California, Berkeley, Berkeley, California, USA. 
 Email:jiachengz@berkeley.edu }}
\date{}

\maketitle

\begin{abstract}
In this paper, we propose several approaches to learn the optimal population-dependent controls in order to solve mean field control problems (MFC). Such policies enable us to solve MFC problems with forms of common noises at a level of generality that was not covered by existing methods. We analyze rigorously the theoretical convergence of the proposed approximation algorithms. Of particular interest for its simplicity of implementation is the $N$-particle approximation. The effectiveness and the flexibility of our algorithms is supported by numerical experiments comparing several combinations of distribution approximation techniques and neural network architectures. We use three different benchmark problems from the literature: a systemic risk model, a price impact model, and a crowd motion model. We first show that our proposed algorithms converge to the correct solution in an explicitly solvable MFC problem. Then, we show that population-dependent controls outperform state-dependent controls. Along the way, we show that specific neural network architectures can improve the learning further.
\end{abstract}

\textbf{Keywords.} {Mean field control; Deep learning; Stochastic optimal control.}

\section{Introduction}

Optimal control problems have found a wide range applications from engineering to finance and robotics. In most cases, the system is subject to random disturbances which means that one has to find optimal controls in a stochastic setting. Several methods have been developed for such problems, such as Bellman's dynamic programming and Pontryagin's maximum principle. While stochastic optimal control is typically limited to one system or a small number of interacting systems (e.g., robots), the framework has recently been extended to the mean field setting. The main motivation of this setting is to study very large populations of strategic identical agents who cooperate to minimize a social cost. The mean-field approximation consists in replacing individual interactions by the interaction of a representative agent with the distribution of the population. This leads to more tractable models and more efficient algorithms. The setting is often referred to as mean 
field control (MFC for short)~\cite{bensoussan2013meanbook}. See \cite{carmona2018probabilistic,carmona2018probabilistic2} for a more comprehensive review.

Numerical methods to compute an optimal control generally rely on backward partial differential equations (PDEs) or backward stochastic differential equations (SDEs). In the MFC setting, these backward equations need to be coupled with forward equations in order to characterize the evolution of the population. The solutions to forward and backward differential equations have been numerically implemented using traditional methods such as finite differences and, more recently, using neural networks (NN) with for instance the Deep Backward SDE method~\cite{ehanjentzen2017deepbsde}, the deep Galerkin method~\cite{sirignano2018dgm} or physics-informed neural networks~\cite{raissi2019physics}. 
In the context of mean field games and control problems, PDEs have been solved using finite-difference schemes~\cite{achdoucapuzzo2010mean,briceno2018proximal} and deep learning methods~\cite{al2018solving,carmona2021convergenceergo}. Deep learning methods for McKean-Vlasov forward-backward SDE systems have also been proposed~\cite{fouque2020deep,carmona2022convergence,aurell2022optimal}. We refer to \cite{hu2023recent} for a recent review. However, the forward-backward structure leads to numerical challenges. Here, we focus on a simpler approach, in which directly aim for learning the optimal control without using backward PDEs or SDEs. This approach has been used previously in standard optimal control problems, e.g. by~\cite{gobet2005sensitivity,han2016deep-googlecitations}, and extended to the mean field setting in~\cite{fouque2020deep,carmona2022convergence}. One of the main advantages is the fact that it does not require any dynamic programming principle, which is known to be challenging to exploit for an MFC problem because it requires solving the problem for all possible distributions, which is not feasible. Learning directly the control through Monte Carlo simulation makes it possible to train the neural network on regions of the space that matter the most. 

When the agents are only subject to idiosyncratic randomness, this randomness vanishes in the mean-field limit and does not affect the evolution of the distribution, which is thus deterministic. In this setting, it is sufficient to learn controls that are functions of the representative agent's state and depend on the distribution only through the time step. 
However, when there is a common source of randomness affecting the whole population, this is no longer true. The evolution of the distribution cannot be predicted with certainty and, to be optimal, it becomes necessary to let the control be a function of the distribution. This question has thus far be little studied, with very few exceptions, such as~\cite{perrin2022generalization}, which introduced master policies in the context of mean field games, \cite{germain2022deepsets}, which focuses on a backward scheme to solve PDEs, and \cite{carmonalaurieretan2019model,gu2021meanfieldcontrolsQ} which develop reinforcement learning (RL) algorithms for mean field Markov decisions processes.

The main contribution of this article is three-fold. Firstly, we establish theoretical approximation guarantees for the population-dependent algorithms dealing with MFCs. In particular, we provide a bound on the sensitivity of the optimal cost when the mean-field distribution is replaced by an approximation in the cost, the dynamics and also the control function. This is about the stability of the problem with respect to (in principle general) distribution approximation. A direct application to numerical methods is when using an $N$-particle approximation for McKean-Vlasov dynamics, which is common in the literature. Secondly, building on this approximation theory, we present an algorithm which trains a neural network to minimize the social cost, and we propose several variants of distribution approximation (empirical, moments, histogram) and NN architectures (feedforward fully connected, convolutional, symmetric). Thirdly, we illustrate the performance of the various distribution approximations and architectures on three examples from the literature. We show that, in the presence of common noise, population-dependent controls outperform population-independent controls, and that the choice of approximation and architecture helps to improve the learning.

In Section~\ref{sec:background}, we present the problem. In Section~\ref{sec:sens}, we prove theoretical guarantees on the MFC problem under distribution approximation. We present the algorithm and three distribution approximations in Section~\ref{sec:method}. Experiments are provided in Section~\ref{sec:numer}. We conclude the paper in Section~\ref{sec:conclusion} and discuss differences with related works in Section~\ref{sec:relatedwork}.

\section{Background}
\label{sec:background}
We first introduce the notations to define the MFC problem with common noise. Let $T>0$ be a finite horizon. Let $\cX = \RR^d$ be the state space and $\cA = \RR^k$ be the action space, where $d$ and $k$ are two integers. For simplicity of presentation, we work on the whole space, although the algorithm could be extended to compact domains. We will denote by $\cP^2(\cX)$ the spaces of probability measures with bounded second moments on $\cX$, endowed with the Wasserstein-$2$ distance denoted by $\cW_2$ the Wasserstein-2 distance and defined for two distributions $\mu, \mu' \in \cP^2(\RR^d)$ as: $\cW_2(\mu,\mu') := \inf_{\gamma \in \Gamma(\mu,\mu')} \big(\int_{\RR^d \times \RR^d}  \|x-x'\|^2 d\gamma(x,x')\big)^{1/2}$, where $\Gamma(\mu,\mu')$ is the set of probability measures in $\cP^2(\RR^d\times \RR^d)$ with marginals $\mu$ and $\mu'$. In this work, as is common in the literature on MFGs and MFCs, we focus on deterministic feedback control functions, also simply called controls in the sequel. However, in contrast to most of the literature, we consider controls that are functions not only of time and the agent's state but also of the state distribution of the population. To be specific, a control is a function $v:  [0,T] \times \cX \times \cP^2(\cX) \to \cA$. Let $V>0$ be a constant and let us denote by $\VV$ the set of controls that are $V$-Lipschitz in all variables. Let $(W_t)_{0\leq t\leq T}$ and $(W^0_t)_{0\leq t\leq T}$ be $d$-dimensional independent Brownian motions defined on a complete
filtered probability space $(\Omega,\mathbb{F}=(\mathcal{F}_t)_{0\leq t\leq T},\mathbb{P})$. We
shall refer to $W$ as the idiosyncratic noise and to $W^0$ as the common noise. We denote by $\F^0_t$ the filtration generated by $W^0$ up to time $t$ and refer to it as the common filtration. For a stochastic process $X = (X_t)_{t \ge 0}$, we denote by $\mathrm{Law}(X_t)$ and $\mathrm{Law}(X_t|\F^0_t)$ respectively the law of $X_t$ and the conditional law of $X_t$ given $\F^0_t$. 

The initial MFC problem with common noise is formulated as follows, where $b: [0,T]\times\cX \times \cA \times \cP^2(\cX) \to \RR^d$ is a drift function, $\sigma$ and $\sigma_0\in\mathbb{R}$ are volatilities, $f: [0,T]\times\cX \times \cA \times \cP^2(\cX) \to \RR$ is a running cost function and $g: \cX \times \cP^2(\cX) \to \RR$ is a terminal cost function. When needed, $\mu_0 \in \cP^2(\cX)$ is an initial distribution. 

\noindent\textbf{Problem 1 (Original MFC formulation)}
\textit{
Minimize over $v\in\VV$ the total expected cost:
\begin{equation*}J(v):=\EE\bigg[\int_0^Tf\big(t,X^{v}_t,A^{v}_t,\mu^{v}_t\big)\d t+g\big(X^{v}_T,\mu^{v}_T\big)\bigg],
\end{equation*}
where $A^{v}_t = v(t,X^{v}_t,\mu^{v}_t)$, $\mu^{v}_t = \mathrm{Law}(X^{v}_t|\F^0_t)$, and $X^v_t$ solves: 
\begin{equation}
\label{eq:process-X-v-continuous}
\begin{cases}
    X^{v}_0 \sim \mu_0
    \\
    dX^{v}_t = b(t,X^{v}_t,A^{v}_t,\mu^{v}_t) dt + \sigma dW_t+\sigma_0dW_t^0, t \ge 0.
\end{cases}
\end{equation}
}

The above dynamics of $X$ involves the law of the process itself and is often referred to as McKean-Vlasov dynamics~\cite{mckean1966class,carmona2018probabilistic}. It is also common to include the law of $X$ only in the cost function, for example in applications to risk management~\cite{andersson2011maximum}. We refer to~\cite{bensoussan2013meanbook} for more background on MFC.

\noindent\textbf{Standing assumption.} We assume that $b, f$ and $g$ are Lipschitz in all their respective variables.

\begin{remark}\label{rmk:sigmaconstant}
In general, the diffusion coefficients $\sigma$ and $\sigma^0$ could depend on the state, distribution and control as well, but for simplicity, we will focus on the above setting with constant diffusions.
\end{remark}

\section{Sensitivity to distribution approximation}\label{sec:sens}

\subsection{Perturbed dynamics and perturbed problem}
From a numerical viewpoint, since in general we cannot represent the distribution $\mu^{v}_t$ exactly, we will replace it by an approximation of the measure. This leads to the following perturbed problem with a perturbed dynamics, which can formally be defined as:

\noindent\textbf{Problem 2 (MFC with approximate distribution)}
\textit{Minimize over $v\in\VV$ the total expected cost:
\begin{equation*}
    \tilde J(v)=\EE\bigg[\int_{0}^{T}f\big(t,\hat X^{v}_t, \hat A^{v}_t, \tilde \mu^{v}_t\big)\d t+g\big(\hat  X^{v}_T, \tilde \mu^{v}_T\big)\bigg].
\end{equation*}
subject to:
\begin{equation*}
    \d \hat X^{v}_{t} = b(t, \hat X^{v}_t,\hat A^{v}_t,\tilde \mu^{v}_t) \d t + \sigma dW_t+\sigma_0dW_t^0,
\end{equation*}
where $\hat A^{v}_t = v(t, \hat X^{v}_t,\tilde \mu^{v}_t)$ and $\tilde \mu^{v}_t \approx\hat \mu_t =\mathrm{Law}(\hat X^{v}_t|\F_t^0)$. 
}

We can view $\tilde\mu^v_t$ as a perturbed version of $\hat\mu^v_t$, which itself is close to $\mu^v_t$ under suitable assumptions. Our main motivation is that numerical computations, the distribution cannot be represented perfectly and we want to account for this approximation. However, this problem is applicable to other settings and we could imagine applications in scenarios where the distribution is only partially observable. 
Importantly, note that the perturbation affects not only the drift and the costs, but also the actions since the distribution is an input of the control $v$. Moreover, note that $\tilde\mu^v_t$ is not necessarily equal to $\hat\mu^v_t:=$ Law$(\hat X^v_t|\mathcal{F}_t^0)$. A natural example is numerical approximations for mean field problems, as we will discuss later. 

Our first result shows that if the approximation is good, then the optimal values are close.  
\begin{theorem}\label{Thm_mainclose}
    Let $v \in \VV$. If $\int_{0}^T\mathbb{E}[\cW_2(\tilde\mu_t,\hat\mu_t)^2]\d t\leq \delta$ in the perturbed problem, then 
    \begin{equation*}
    \Big|\inf_{v\in\VV} J(v)- \inf_{v\in\VV} \tilde J(v)\Big|\leq C\delta,
    \end{equation*}
for some constant $C>0$ depending only on the Lipschitz constants of $b, f$ and $g$, on $V$ and on $T$. Moreover, let $v^*\in\VV$ be some near optimal control for the perturbed problem satisfying $\tilde J(v^*)\leq \inf_{v\in\VV} \tilde J(v)+\epsilon$ for some $\epsilon>0$. Then $v^*$ is also a near optimal control for the original problem in the sense that
\begin{equation*}
    J(v^*)\leq \inf_{v\in\VV} J(v)+\epsilon+C\delta,
\end{equation*}
for some constant $C>0$ depending only on the Lipschitz constants of $b,f$, and $g$, on $V$, and on $T$. 
\end{theorem}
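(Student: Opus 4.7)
The plan is to prove a pointwise stability estimate $|J(v) - \tilde J(v)| \le C\delta$ for every fixed $v \in \VV$, then pass to the infima, and finally combine this with the near-optimality of $v^*$ to get the second conclusion. Once $v$ is held fixed the two problems only differ through the conditional law entering the drift, the control, and the costs, so a Gronwall-type argument applied to the pathwise error $X^v_t - \hat X^v_t$ will be the main engine.

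For the state-stability step, I would subtract the SDE in \eqref{eq:process-X-v-continuous} and its perturbed counterpart; the constant diffusion terms cancel, leaving only a drift increment $b(s, X^v_s, v(s, X^v_s, \mu^v_s), \mu^v_s) - b(s, \hat X^v_s, v(s, \hat X^v_s, \tilde\mu^v_s), \tilde\mu^v_s)$. Applying Cauchy--Schwarz in time, Lipschitz continuity of $b$, and the $V$-Lipschitz property of $v \in \VV$ to bound the action increment by the state and measure increments yields
\begin{equation*}
\|X^v_t - \hat X^v_t\|^2 \le C_1 \int_0^t \bigl(\|X^v_s - \hat X^v_s\|^2 + \cW_2(\mu^v_s, \tilde\mu^v_s)^2\bigr) ds.
\end{equation*}
Splitting $\cW_2(\mu^v_s, \tilde\mu^v_s)^2 \le 2\cW_2(\mu^v_s, \hat\mu^v_s)^2 + 2\cW_2(\hat\mu^v_s, \tilde\mu^v_s)^2$ and using the coupling bound $\cW_2(\mu^v_s, \hat\mu^v_s)^2 \le \EE[\|X^v_s - \hat X^v_s\|^2 \mid \F^0_s]$ (the joint law of $(X^v_s, \hat X^v_s)$ conditional on $\F^0_s$ is a coupling of $\mu^v_s$ and $\hat\mu^v_s$), then taking expectations and invoking Gronwall's lemma, I would conclude $\sup_{t \in [0,T]} \EE[\|X^v_t - \hat X^v_t\|^2] \le C_2 \delta$ uniformly in $v \in \VV$, where $C_2$ depends only on the Lipschitz constant of $b$, on $V$ and on $T$.

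For the cost step, $J(v) - \tilde J(v)$ is bounded by the integral of $|f(t, X^v_t, A^v_t, \mu^v_t) - f(t, \hat X^v_t, \hat A^v_t, \tilde\mu^v_t)|$ plus the terminal increment; Lipschitz continuity of $f$, $g$, and $v$ reduces this to expectations of $\|X^v_t - \hat X^v_t\|$, $\|A^v_t - \hat A^v_t\|$, and $\cW_2(\mu^v_t, \tilde\mu^v_t)$, each of which is controlled via Jensen's inequality together with Step~1 and the conditional coupling bound. This gives the uniform estimate $|J(v) - \tilde J(v)| \le C_3 \delta$. To pass to the infima, I would pick an $\epsilon$-minimizer $v_\epsilon$ of $J$ and observe $\inf_v \tilde J(v) \le \tilde J(v_\epsilon) \le J(v_\epsilon) + C_3\delta \le \inf_v J(v) + \epsilon + C_3\delta$; swap the roles of $J$ and $\tilde J$ for the other direction and send $\epsilon \to 0$. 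The near-optimality claim then follows by chaining: $J(v^*) \le \tilde J(v^*) + C_3\delta \le \inf_v \tilde J(v) + \epsilon + C_3\delta \le \inf_v J(v) + \epsilon + 2C_3\delta$, with the final constant absorbing the factor of $2$.

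The main obstacle is the circular dependence flagged in the state-stability step: the error $\|X^v_t - \hat X^v_t\|$ appears on both sides of the Gronwall inequality, once directly and once through the conditional law that enters both the drift and the control. Closing this loop relies crucially on the Lipschitz regularity of $v$, which is precisely what justifies the restriction to $v \in \VV$, and on the conditional coupling inequality, which converts the pathwise $L^2$ error into a bound on the conditional Wasserstein distance. A secondary subtlety is that the hypothesis bounds $\cW_2(\hat\mu_t, \tilde\mu_t)$ rather than $\cW_2(\mu_t, \tilde\mu_t)$, so an additional triangle inequality is needed to relate $\tilde\mu$ back to the unperturbed law before closing the Gronwall argument.
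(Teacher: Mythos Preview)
Your proposal is correct and follows essentially the same route as the paper: a pointwise estimate $|J(v)-\tilde J(v)|\le C\delta$ via a Gronwall argument on $\EE[\|X^v_t-\hat X^v_t\|^2]$ (using the conditional coupling bound and the triangle inequality through $\hat\mu$), followed by passage to infima and the chaining inequality for $v^*$. The only cosmetic differences are that the paper packages the state-stability step as a separate lemma (proved via It\^o's formula to accommodate state-dependent diffusions), and that it absorbs $v$ into auxiliary functions $\tilde b(t,x,\mu)=b(t,x,v(t,x,\mu),\mu)$ and $f^*(t,x,\mu)=f(t,x,v(t,x,\mu),\mu)$ rather than bounding the action increment separately as you do.
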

In particular, an optimal control for $\tilde J$ is an approximately optimal control for the original $J$, and the sub-optimality decreases as the approximation of the distribution improves. The proof relies on the propagation of the distribution approximation through the dynamics and the cost function. The proof, provided below, relies on Lemma~\ref{lem:1} in Appendix~\ref{app:proof}.
\begin{proof}
For any $v\in\VV$, let $\tilde b(t,x,\mu):=b(t,x,v(t,x,\mu),\mu)$. Then
\begin{equation*}
    \d X^{v}_t=\tilde b(t,X^{v}_t,\mu^v_t)\d t+\sigma\d W_t+\sigma_0 \d W^0_t,
\end{equation*}
and
\begin{equation*}
    \d \hat X^{v}_t=\tilde b(t,\hat X^{v}_t,\tilde\mu^v_t)\d t+\sigma \d W_t+\sigma_0\d W^0_t,
\end{equation*}
Using Lemma \ref{lem:1} in Appendix~\ref{app:proof}, we get
\begin{equation*}
    \E\big[(X^{v}_t-\hat X^{v}_t)^2\big]+\E\big[\cW_2(\mu^{v}_t,\hat\mu^{v}_t)^2\big] \leq C_{T,V}\delta
\end{equation*}
for some constant $C_{T,V}$ depending only on the Lipschitz constant of $b,$, on $V$ and on $T$. Set $f^*(t,x,\mu)=f(t,x,v(t,x,\mu),\mu)$. Then:
\begin{equation*}
    J(v)=\E\bigg[\int_0^Tf^*\big(t,X^{v}_t,\mu^{v}_t\big)\d t+g\big(X^{v}_T,\mu^{v}_T\big)\bigg],
\end{equation*}
and
\begin{equation*}
    \tilde{J}(v)=\E\bigg[\int_0^Tf^*\big(t,\hat X^{v}_t,\tilde\mu^{v}_t\big)\d t+g\big(\hat X^{v}_T,\tilde \mu^{v}_T\big)\bigg].
\end{equation*}
Therefore, by the Lipschitz continuity of $f$ and $g$, we get 
\begin{equation*}
   |J(v)-\tilde{J}(v)|\leq C_{T,\ell}\delta,
\end{equation*}
where the constant $C_{T,\ell}$ now may also depend on the Lipschitz constants of $f$ and $g$. 
Hence, in particular:
\begin{equation*}
\Big|\inf_{v\in\VV} J(v)- \inf_{v\in\VV} \tilde J(v)\Big|\leq C_{T,\ell}\delta.
\end{equation*}
and moreover, if $v^*\in \VV$ satisfies
\begin{equation*}
    \tilde J(v^*)\leq \inf_{v\in\VV} \tilde J(v)+\epsilon,
\end{equation*}
then we have
\begin{equation*}
    \begin{aligned}
    J(v^*)\leq &\tilde J(v^*)+C_{T,\ell}\delta\leq \inf_{v\in\VV} \tilde J(v)+\epsilon+C_{T,\ell}\delta
    \\
    \leq & \inf_{v\in\VV} J(v)+\epsilon+2C_{T,\ell}\delta,
    \end{aligned}
\end{equation*}
which completes the proof.
\end{proof}

\begin{remark}
The above theorem is about the stability of the problem with respect to the approximation, which did not exist in the literature, to the best of our knowledge. It is motivated by numerical applications, using controls that are functions of an approximate distribution. In this paper, we use $N$-particle systems as a building block, but the above theorem could also be used for other approximation methods. 
\end{remark}

Let us turn to the $N$-particle approximation and then propose three canonical approximation algorithms based on empirical distribution, the empirical moments, and empirical histogram.

\subsection{$N$-particle approximation}
In this subsection, we are going to use a finite population of $N$ particles to construct a suitable approximation of the distribution. Let us first state a fundamental result that underpins our particle-based approach. Below, $\tilde{b}(t,x,\mu)$ plays the role of $b(t,x,v(t,x,\mu),\mu))$. It is Lipschitz thanks to the fact that the controls $v \in \VV$ are Lipschitz.
\begin{proposition}\label{prop_n}
Consider the SDE: 
\begin{equation*}
    \d X_t=\tilde b(t,X_t,\mu_t)\d t+\sigma \d W_t+\sigma_0 \d W^0_t,
\end{equation*}
where $\mu_t=\mathrm{Law}(X_t|\F_t^0)$ which we assume to have a moment of order $q>4$. Consider the system:
\begin{equation*}
    \d X_t^i=\tilde b(t,X_t^i,\bar\mu_t^N)\d t+\sigma\d W_t^i+\sigma_0\d W^0_t,
\end{equation*}
where $X_0^i$ are i.i.d. with the same law as $X_0$, $W^i$ are independent Brownian motions, and $\bar \mu_t^N = \frac{1}{N}\sum_{i=1}^N \delta_{X_t^i}$ denotes the empirical distribution of states. Then for all $t \in [0,T]$, 
$$
\int_0^T\mathbb{E}[\cW_2(\bar \mu_t^N,\hat\mu_t)^2]\d t\leq C\delta_N,
$$ 
where 
\begin{align}\label{delta_N}
\delta_N=N^{-2/\max(d,4)}(1+\ln(N) \boldsymbol{1}_{d=4}),
\end{align}
for some constant $C$ independent of $N$.
\end{proposition}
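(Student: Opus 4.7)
The plan is to carry out a standard propagation-of-chaos argument adapted to the common-noise setting by coupling the interacting particle system with an auxiliary system of conditionally i.i.d. copies. Concretely, I introduce auxiliary processes
\begin{equation*}
    \d\tilde X^i_t=\tilde b(t,\tilde X^i_t,\mu_t)\d t+\sigma\d W^i_t+\sigma_0\d W^0_t,\qquad \tilde X^i_0=X^i_0,
\end{equation*}
driven by the same idiosyncratic and common Brownian motions as the particles $X^i$, but with the drift evaluated at the true conditional law $\mu_t=\hat\mu_t$. Conditional on $\F^0$, the $\tilde X^i$ are i.i.d. with conditional law $\mu_t$. Writing $\tilde\mu^N_t=\tfrac{1}{N}\sum_{i=1}^N\delta_{\tilde X^i_t}$, the triangle inequality in $\cW_2$ splits the error into a coupling term and an empirical measure term:
\begin{equation*}
    \E\big[\cW_2(\bar\mu^N_t,\hat\mu_t)^2\big]\leq 2\E\big[\cW_2(\bar\mu^N_t,\tilde\mu^N_t)^2\big]+2\E\big[\cW_2(\tilde\mu^N_t,\mu_t)^2\big].
\end{equation*}

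For the coupling term, I use the crude matching bound $\cW_2(\bar\mu^N_t,\tilde\mu^N_t)^2\leq \tfrac{1}{N}\sum_i|X^i_t-\tilde X^i_t|^2$. Subtracting the two SDEs, taking expectations, and invoking the Lipschitz continuity of $\tilde b$ (which follows from the Lipschitz assumption on $b$ together with $v\in\VV$), a standard computation yields
\begin{equation*}
    \E\big[|X^i_t-\tilde X^i_t|^2\big]\leq C\int_0^t\Big(\E\big[|X^i_s-\tilde X^i_s|^2\big]+\E\big[\cW_2(\bar\mu^N_s,\mu_s)^2\big]\Big)\d s.
\end{equation*}
Averaging over $i$, this bootstraps the control of the coupling term by the empirical measure term via Gr\"onwall once the second piece is under control.

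For the empirical-measure term, the key observation is that conditional on $\F^0_t$, the random variables $\tilde X^1_t,\dots,\tilde X^N_t$ are i.i.d.\ with common conditional law $\mu_t$. I then apply the Fournier--Guillin quantitative estimate to obtain $\E\big[\cW_2(\tilde\mu^N_t,\mu_t)^2\,\big|\,\F^0_t\big]\leq C\,M_q(\mu_t)^{2/q}\,\delta_N$, where $M_q$ denotes the $q$-th moment and the moment condition $q>4$ is exactly what makes the Fournier--Guillin rate take the form \eqref{delta_N}. Taking expectations and using uniform-in-time moment bounds for the McKean--Vlasov SDE (which follow from the linear growth of $\tilde b$ and Burkholder--Davis--Gundy) yields $\E[\cW_2(\tilde\mu^N_t,\mu_t)^2]\leq C\delta_N$. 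Substituting into the triangle inequality, applying Gr\"onwall in its integrated form and integrating over $[0,T]$ delivers the claimed bound. The main obstacle, and the reason the auxiliary coupling is essential, is that the particles $X^i$ are not i.i.d.\ under $\mathbb{P}$ nor conditionally on $\F^0$, so Fournier--Guillin cannot be applied to $\bar\mu^N_t$ directly; coupling to the conditionally i.i.d.\ system $\tilde X^i$ through the shared $W^0$ is precisely what restores the conditional independence needed for the quantitative empirical-measure rate.
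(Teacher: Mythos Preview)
Your approach is essentially the same as what underlies the paper's proof, which simply cites Theorem~2.12 in Carmona--Delarue's \emph{Probabilistic Theory of Mean Field Games with Applications~II} as a black box. That theorem is precisely the propagation-of-chaos result you are sketching: introduce the auxiliary conditionally-i.i.d.\ copies $\tilde X^i$, apply Fournier--Guillin conditionally on $\F^0$ to $\tilde\mu^N_t$, and close the coupling via Gr\"onwall. So you are unpacking what the paper invokes off the shelf.

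There is, however, a small gap in your write-up. You assert $\mu_t=\hat\mu_t$, but in the paper's setup $\hat\mu_t$ is the conditional law of the \emph{perturbed} process from Problem~2, not of the original McKean--Vlasov process. When one takes $\tilde\mu_t=\bar\mu^N_t$ in Problem~2 and drives the perturbed SDE with $W=W^i$, the perturbed process coincides with a single particle $X^i$, so $\hat\mu_t=\mathrm{Law}(X^i_t\mid\F^0_t)$, which is \emph{not} $\mu_t$ in general. The paper handles this via the triangle inequality $\cW_2(\bar\mu^N_t,\hat\mu_t)\leq \cW_2(\bar\mu^N_t,\mu_t)+\cW_2(\mu_t,\hat\mu_t)$ and bounds the second term using $\cW_2(\mu_t,\hat\mu_t)^2\leq \E\big[|X^i_t-\tilde X^i_t|^2\,\big|\,\F^0_t\big]$, which is exactly your coupling term. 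So your argument needs only one additional triangle-inequality step to reach the correct $\hat\mu_t$; all the ingredients you already developed suffice.
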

In particular, we can use $\tilde\mu_t = \bar \mu_t^N$ in Theorem~\ref{Thm_mainclose}.

The proof of proposition~\ref{prop_n} utilizes \cite[Theorem 2.12]{carmona2018probabilistic2}; see Appendix~\ref{app:proof} for more details. 
Let us stress that the above statement can be applied to \emph{any} distribution approximation technique. To the best of our knowledge, such statements did not exist in the literature, probably because the question of learning population-dependent policies using distribution approximations have garnered interest only recently. Next, we will study three such approximations.  Now let us define the perturbed problem using $N$-particles.

\noindent\textbf{Problem 3 ($N$-particle control problem)} \textit{
Minimize over $v \in\VV$ the total expected social cost:
\begin{equation*}
    \begin{aligned}
    J^N(v)=\frac{1}{N}\sum_{i=1}^N\EE\bigg[\int_0^Tf\big(t,X^{i,v}_t,A^{i,v}_t, \mu^{N,v}_t\big)\d t
    +g\big(X^{i,v}_T,\mu^{N,v}_T)\big)\bigg],
    \end{aligned}
\end{equation*}
where $A^{i,v}_t=v(t,X^{i,v}_t,\mu^{N,v}_t)$, $i=1,\dots,N$, and $\mu^{N,v}_t=\frac1N\sum_{i=1}^N\delta_{X^{i,v}_t}$, 
subject to: $X^{i,v}_0\sim\mu_0$ i.i.d., and for $t \ge 0$, 
\begin{equation*}
    \d X^{i,v}_t=b\big(t,X^{i,v}_t,A^{i,v}_t, \mu^{N,v}_t\big)\d t+ \sigma\d W^i_t+\sigma^0\d W_t^0,
\end{equation*}
}
The approximation of McKean-Vlasov dynamics using a system of interacting particles is classical in the literature, see e.g.~\cite{fouque2020deep,carmona2022convergence,kumar2022well}. The novelty here is the class of controls, which are allowed to depend on the empirical population distribution in a generic (Lipschitz) way.

Combining Theorem \ref{Thm_mainclose} and Proposition \ref{prop_n}, we get:
\begin{corollary}\label{cor_mainclose}
    Let $v \in \VV$, and consider the $N$-particle control problem. We have
    \begin{equation*}
    \Big|\inf_{v\in\VV} J(v)- \inf_{v\in\VV} J^N(v)\Big|\leq C\delta_N
    \end{equation*}
for some constant $C>0$ depending only on the Lipschitz constants of $b,\VV$, and on $T$ and where $\delta_N$ is defined as in \eqref{delta_N} with $\tilde{b}(t,x,\mu)=b(t,x,v(t,x,\mu),\mu))$. Moreover, let $v^*\in\VV$ being some near optimal control for the perturbed problem satisfying $J^N(v^*)\leq \inf_{v\in\VV} J^N(v)+\epsilon$ for some $\epsilon>0$.  Then $v^*$ is also a near optimal control for the original problem in the sense that
\begin{equation*}
    J(v^*)\leq \inf_{v\in\VV} J(v)+\epsilon+C\delta_N,
\end{equation*}
for some constant $C>0$ depending only on the Lipschitz constants of $b,f$ and $g$, on $V$ and on $T$. 
\end{corollary}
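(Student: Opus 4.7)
The plan is to recognize the $N$-particle control problem as a specific instance of the perturbed MFC framework of Theorem~\ref{Thm_mainclose}, with the approximate distribution $\tilde\mu^v_t$ taken to be the empirical distribution $\mu^{N,v}_t$, and then to invoke Proposition~\ref{prop_n} to bound the approximation error. The two quoted results combine almost mechanically into the stated estimates.

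In the first step, I would fix $v\in\VV$ and exploit the exchangeability of the particles: since the $X^{i,v}_0$ are i.i.d., the $W^i$ are i.i.d., and each particle's drift depends on the same empirical $\mu^{N,v}_t$, the processes $X^{i,v}$ all share the same law conditional on $\F^0$. The symmetric cost can therefore be rewritten as
\begin{equation*}
J^N(v)=\E\Big[\int_0^T f\big(t,X^{1,v}_t,v(t,X^{1,v}_t,\mu^{N,v}_t),\mu^{N,v}_t\big)\,\d t + g\big(X^{1,v}_T,\mu^{N,v}_T\big)\Big],
\end{equation*}
which, upon identifying $\hat X^v\equiv X^{1,v}$ and $\tilde\mu^v\equiv\mu^{N,v}$, is exactly the cost $\tilde J(v)$ of Problem~2.

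In the second step, I would apply Proposition~\ref{prop_n} with $\tilde b(t,x,\mu):=b(t,x,v(t,x,\mu),\mu)$, which is Lipschitz because $v\in\VV$ and $b$ is Lipschitz by the standing assumption. This yields $\int_0^T\E[\cW_2(\mu^{N,v}_t,\mu^v_t)^2]\,\d t\leq C\delta_N$, where $\mu^v_t$ is the conditional law of the McKean--Vlasov limit. A short triangle-inequality argument, combined with the synchronous coupling of $X^{1,v}$ to its McKean--Vlasov counterpart that already underlies the proof of Proposition~\ref{prop_n}, upgrades this bound to $\int_0^T\E[\cW_2(\mu^{N,v}_t,\hat\mu^v_t)^2]\,\d t\leq C\delta_N$ with $\hat\mu^v_t=\mathrm{Law}(\hat X^v_t|\F^0_t)$. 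This is precisely the hypothesis of Theorem~\ref{Thm_mainclose} with $\delta=C\delta_N$.

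The two statements of the corollary then follow verbatim from the two conclusions of Theorem~\ref{Thm_mainclose}, and the claimed dependence of $C$ on the Lipschitz constants of $b,f,g$, on $V$, and on $T$ matches what the theorem delivers. The only real obstacle is the small bookkeeping subtlety in the second step: Proposition~\ref{prop_n} is phrased in terms of the McKean--Vlasov limit law $\mu^v_t$, whereas Theorem~\ref{Thm_mainclose} demands closeness to the conditional law $\hat\mu^v_t$ of the finite-particle process itself. Reconciling the two needs one triangle inequality and a standard chaos estimate for a single tagged particle, but no new ideas beyond those already used to prove Proposition~\ref{prop_n}.
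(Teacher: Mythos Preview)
Your proposal is correct and matches the paper's own argument: identify each $X^{i,v}$ with the perturbed process $\hat X^v$ of Problem~2, take $\tilde\mu^v_t=\mu^{N,v}_t$, invoke Proposition~\ref{prop_n} for the $\delta$-bound, and then read off both conclusions from Theorem~\ref{Thm_mainclose}. The only remark is that your ``bookkeeping subtlety'' is not actually an extra step: in the paper's statement of Proposition~\ref{prop_n} the quantity $\hat\mu_t$ already denotes $\mathrm{Law}(X^i_t\mid\F^0_t)$ (the single-particle conditional law), not the McKean--Vlasov limit $\mu_t$, so the proposition delivers exactly the hypothesis of Theorem~\ref{Thm_mainclose} without any additional triangle-inequality patching.
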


This result provides a theoretical foundation for the algorithms we propose below using an empirical distribution. 

\subsection{Implementing empirical distribution}
In general, representing a distribution can be challenging. The controller may not know the whole information of the distribution, and can only achieve limited information through some mapping $\psi: \mathcal{P}^2(\mathbb{R}^d)\to \mathbb{R}^m$. A first natural example is to represent the distribution through a finite number of samples, i.e., we take the mapping $\psi(\mu)=(X_t^1, X_t^2, \dots, X_t^N)$ which gives an empirical approximation provided the output of $\psi$ is then given to a function that is symmetric with respect to its $N$ inputs. Two other natural examples are moments: $\psi(\mu) = \big(\mathbb{E}_\mu[X^k]\big)_{k=0,1,..,m}$, and the histogram: $\psi(\mu) = \boldsymbol{p}^\mu$, where $\boldsymbol{p}^\mu$ is the histogram constructed from the empirical distribution using $B$ bins of uniform size over an hypercube of side length $L$, and one extra bin for points outside this hypercube, detailed definition can be found in \cite{lecoutre1985l2}. Therefore, it is natural to restrict our admissible set $\VV$ to the limited version $\VV^{\psi}$ where
 $
    v(t,X_t,\mu_t)=\tilde v(t,X_t,\psi(\mu_t))
$ 
for some $\tilde v:[0,T]\times \mathcal{X}\times\mathbb{R}^m\to\mathcal{A}$. In this case, the closeness corollary (Corollary~\ref{cor_mainclose}) can be modified as follows.
\begin{corollary}\label{cor_lim}
    Let $v \in \VV$, and consider the $N$-particle problem under the admissible set $\VV^\psi$ with Lipschitz function $\psi$, we have 
    \begin{equation*}
    \Big|\inf_{v\in\VV^\psi} J(v)- \inf_{v\in\VV^\psi} J^N(v)\Big|\leq C\delta_N
    \end{equation*}
for some constant $C>0$ depending only on the Lipschitz constants of $b,\VV^\psi, \psi$, and on $T$. Moreover, if $v^*\in\VV^\psi$ satisfies $J^N(v^*)\leq \inf_{v\in\VV^\psi} J^N(v)+\epsilon$, then 
\begin{equation*}
    J(v^*)\leq \inf_{v\in\VV^\psi} J(v)+\epsilon+C\delta_N,
\end{equation*}
for some constant $C>0$ depending only on the Lipschitz constants of $b, f, g$ and $\psi$, on $V$, and on $T$. 
\end{corollary}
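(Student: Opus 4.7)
The strategy is to reduce Corollary~\ref{cor_lim} to a direct specialization of Corollary~\ref{cor_mainclose} after observing that imposing the factorization $v(t,x,\mu)=\tilde v(t,x,\psi(\mu))$ merely restricts the admissible class and preserves the Lipschitz hypotheses already exploited in Theorem~\ref{Thm_mainclose} and Proposition~\ref{prop_n}. Essentially, the proof is the same bookkeeping, with $\VV$ replaced by $\VV^\psi$ and with one additional composition step to propagate Lipschitz regularity through $\psi$.

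First, I would check that $\VV^\psi$ is, up to constants, a subclass of $\VV$. If $\tilde v$ is Lipschitz in its $\RR^m$ argument and $\psi:(\cP^2(\RR^d),\cW_2)\to\RR^m$ is Lipschitz, then the composition $(t,x,\mu)\mapsto\tilde v(t,x,\psi(\mu))$ is Lipschitz in $\mu$ with respect to $\cW_2$, with constant bounded by the product of the two Lipschitz constants. Possibly after enlarging $V$ in the definition of $\VV$, every $v\in\VV^\psi$ is therefore an element of $\VV$, so the entire machinery of Section~\ref{sec:sens} applies directly.

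Next, I would rerun the proof of Theorem~\ref{Thm_mainclose} for an arbitrary $v\in\VV^\psi$, taking the perturbed distribution to be $\tilde\mu^v_t=\mu^{N,v}_t$, the empirical measure of the $N$-particle system of Problem~3. This yields
\[
|J(v)-J^N(v)|\leq C\int_0^T\E\bigl[\cW_2(\mu^{N,v}_t,\hat\mu^v_t)^2\bigr]\d t,
\]
with $C$ depending only on the Lipschitz constants of $b,f,g$, on $T$, and (via the enlarged $V$) on the Lipschitz constants of $\tilde v$ and $\psi$. Applying Proposition~\ref{prop_n} with $\tilde b(t,x,\mu)=b(t,x,v(t,x,\mu),\mu)$ bounds the right-hand side by $C\delta_N$, uniformly in $v\in\VV^\psi$. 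Taking $\inf_{v\in\VV^\psi}$ on both sides of the resulting two-sided bound gives the first displayed inequality. For the near-optimality statement, I would then chain the standard $\epsilon$-optimum inequalities, exactly as in the proof of Theorem~\ref{Thm_mainclose}: from $J^N(v^*)\leq\inf_{\VV^\psi}J^N+\epsilon$ and the uniform bound just established, one gets $J(v^*)\leq\inf_{\VV^\psi}J+\epsilon+2C\delta_N$, and relabels the constant.

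The main conceptual point to be careful about—and what I expect to be the only real obstacle—is verifying that the hypothesis ``$\psi$ is Lipschitz'' is actually met on the ranges of measures produced by the dynamics. For the empirical-sample map this is immediate; for the first-moment map it follows from Kantorovich--Rubinstein; but for higher-order moments or the histogram map it can fail on all of $\cP^2(\RR^d)$ (higher moments are not $\cW_2$-Lipschitz without uniform moment bounds, and histograms are discontinuous at bin boundaries). The honest fix is to restrict to measures with a uniform moment bound, which is available a priori for the particle system under the standing Lipschitz assumption on $b$, or, in the histogram case, to work with a mildly smoothed version of $\psi$. Modulo this verification, the constant $C$ in the bound is obtained as a product of the Lipschitz constants from Theorem~\ref{Thm_mainclose} and of $\psi$, as stated.
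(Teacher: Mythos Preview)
Your proposal is correct and follows essentially the same approach as the paper: the key observation is that for $v\in\VV^\psi$ with $\psi$ Lipschitz, the composed drift $\tilde b(t,x,\mu)=b(t,x,v(t,x,\mu),\mu)$ remains Lipschitz in $\mu$, so Theorem~\ref{Thm_mainclose} and Proposition~\ref{prop_n} combine exactly as in Corollary~\ref{cor_mainclose}, now with the infimum taken over $\VV^\psi$. Your additional remarks on the failure of Lipschitz continuity for higher moments and histogram bins also mirror the paper's own Remark~\ref{rmk_mom}.
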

The last result quantifies to what extent an approximately optimal control for the perturbed problem is also approximately optimal for the original problem. 

\begin{remark}
\label{rem:info_gap}
In general, there exists a gap between the original problem $\inf_{v\in\VV} J(v)$ and the restricted problem $\inf_{v\in\VV^\psi} J(v)$ because of the change of the admissible set from $\VV$ to $\VV^\psi$, which will be shown in some of the examples at Section \ref{sec:numer}. It is natural to ask when we can fill in this information gap, which is an intriguing problem for future research directions.
\end{remark}

\begin{remark}\label{rmk_mom}
Moment functions are not Lipschitz with respect to $\cW_2$ distance. To be more precise, we should look at the truncated moments and the details are discussed in the Appendix \ref{app:proof}.
\end{remark}

This inspires us to design the algorithm using empirical distribution, empirical moments and empirical histogram and to verify the convergence results.

\section{Method}\label{sec:method}

In this section, we describe the main components of the method we propose, with several variants of implementation.

\vskip6pt
\noindent{\bf Neural network architectures. } 
We are looking for optimal controls that are functions of time, state and the state distribution of the population. Therefore, in our numerical method we replace control $\tilde v$ by a parameterized function $\tilde{v}_{\theta_1} : [0,T] \times \mathbb{R}^d \times \mathbb{R}^m \to \mathbb{R}^k$ with parameter $\theta_1$. Since neural networks are good at approximating nonlinear functions and since the dimension of state or distribution approximation could be possibly high, we utilize fully connected feed-forward neural networks (NNs) to approximate the optimal control.

In order to approximate the distribution input that is used in the optimal control neural network, we apply two steps. Firstly, we use 3 different approaches (moments, histogram and empirical distribution) to summarize the state distribution of particles. This step can be thought as applying a mapping $\psi : \mathbb{R}^{N\times d}\to \mathbb{R}^{l_1}$ to the states of $N$ particles. Then we use this as an input (after the possible necessary reshaping from $\mathbb{R}^{l_1}$ to $\mathbb{R}^{l_2}$) for our parameterized distribution embedding function $m_{\theta_2} : \mathbb{R}^{l_2} \to \mathbb{R}^m$ with parameter $\theta_2$. In order to approximate this parameterized function, we use different NN architectures: feed-forward NNs (FFNN) (if the distribution summary is constructed by moments, histogram and empirical distribution), convolutional NNs (CNN) (if the summary is given by a histogram) and symmetric NNs (SYM) (if the summary is given by the empirical distribution). Here, the dimensions $l_1$ and $l_2$ depend on both distribution summary methods and also the type of the NN. For example, if we use histogram approximation with FFNN with state dimension is equal to 2, then $l_1=\texttt{nbin}\times\texttt{nbin}$ and $l_2=\texttt{nbin}^2$ where \texttt{nbin} is the number of bins of the histogram for each dimension of the state. In summary, we will compare 5 different approximation methods for the distribution embedding: {\bf i)} FFNN with empirical approximation (\textit{emp}), {\bf ii)} FFNN with moments (\textit{mom}), {\bf iii)} FFNN with histogram (\textit{hist}), {\bf iv)} CNN with histogram (\textit{hist\_CNN}), {\bf v)} SYM with empirical approximation (\textit{emp\_SYM}).

The FFNN and the CNN architectures are well-known but the symmetric architectures are less standard. For the sake of clarity, let us explain in more details the symmetric neural network architecture that we use. Its form ensures that it is invariant with respect to permutations of the positions: let $x = (x^1,\dots,x^N)$ be the vector of positions for the $N$ particles, each of them in dimension $d$. In the notations of Section 4.1, the neural network is of the form:
    $$
        m_{\theta_2}(x) = \Phi_2\left( \frac{1}{N} \sum_{i=1}^N\Phi_1(x^i;\theta_{2,1});\theta_{2,2}\right),  \qquad \theta_2 = (\theta_{2,1},\theta_{2,2})
    $$
    where $\Phi_1(\cdot; \theta_{2,1}): \mathbb{R}^d \to \mathbb{R}^{d_I}$ is a neural network with parameters $\theta_{2,1}$ (in the implementation of ``empirical + SYM'', it is the 4 hidden layers  and $d_I = 100$), and $\Phi_2(\cdot; \theta_{2,2}): \mathbb{R}^{d_I} \to \mathbb{R}^{m}$ (in the implementation of ``empirical + SYM'', this is the output layer).

    In contrast, the empirical + FFNN architecture is of the form: 
    $$
        m_{\theta_2}(x) = \Phi\left( (x^1,\dots,x^N);\theta_{2}\right).
    $$
      where $\Phi(\cdot, \theta_2): \mathbb{R}^{N\times d} \to \mathbb{R}^{m}$ is a neural network with parameters $\theta_{2}$ (in the implementation of ``empirical +FFNN'', it is the 4 hidden layers and 1 output layer).

\vskip6pt
\noindent{\bf Monte Carlo simulation. }
In order to simulate the trajectory of $\boldsymbol X$, we will use discrete time dynamics with $N$ particles. We denote with $\llbracket N \rrbracket = \{1,\dots, N\}$ the set of indices of particles. Let $\mathcal{T}=\{0,\Delta t, 2\Delta t, \dots, n\Delta t =T\}$. We construct Monte Carlo trajectories, $(X_t^i)_{t\in \mathcal{T},\ i \in \llbracket N \rrbracket}$ given parameterized control function $\tilde{v}_{\theta_1}$ and the empirical distribution $\mu_t^N$ obtained by simulating the $N$ particles. After initialization of $X_0^i \sim \mu_0$,\footnote{In some of the examples, we implemented common initial randomness instead of common noise in the dynamics. In that case $X_0^i = \tilde{X}^i_0 + x_0$ with $\tilde{X}^i_0 \sim \mu_0 $ and where $x_0\sim\mu_0^0$ denotes the common initial randomness and we take $\sigma^0=0$.} the iterations continue until $t=T$. The discrete time updates are done by using the following Euler-Maruyama approximation of the continuous time dynamics:
\begin{equation}
\label{eq:X_discrete_dynamics_numerics}
X^i_{t+\Delta t} = X^i_t+b(t,X^i_t,\tilde{v}_{\theta_1}^{i},\mu^N_t) \Delta t
+\sigma\epsilon^i_t
+\sigma^0\epsilon_t^0,      
\end{equation}
where $\tilde{v}_{\theta_1}^{i} = \tilde{v}_{\theta_1}^{i}(t, X_t^i, m_{\theta_2}(\psi(\boldsymbol{X}_t)))$, $\boldsymbol{X}_t = (X_t^i)_{i \in \llbracket N \rrbracket}$ and $\mu_t^{N}= \frac{1}{N}\sum_{i=1}^N \delta_{X_t^{i}}$. $\epsilon^i_t \sim \mathcal{N}(0, \Delta t)$ and $\epsilon^0_t \sim \mathcal{N}(0, \Delta t)$ represent the idiosyncratic and common noises, respectively. The details of the Monte Carlo simulation and distribution embedding methods can be found in Algorithm~\ref{algo:sim-X}.

\begin{algorithm}[t]
\caption{Monte Carlo simulation of an interacting batch with distribution embedding \label{algo:sim-X}}
\textbf{Input:} number of particles $N$; time horizon $T$; time increments $\Delta t$; initial distribution $\mu_{0}$; initial common randomness $\mu_0^0$; control function $\tilde{v}$; distribution embedding function $m$; number of bins for histogram approximation \texttt{nbin}; number of moments \texttt{nmom}; type of the approximation \texttt{type}

\textbf{Output:} Approximate sample trajectories of $(\boldsymbol X)$ using \eqref{eq:X_discrete_dynamics_numerics}\vskip1mm
\begin{algorithmic}[1]
\STATE{Set $n=0, t=0, x_0 \sim \mu_0^0, \tilde{X}_0^i\sim \mu_0\ i.i.d., X_0^i =\tilde{X}_0^i +x_0, \forall i\in \llbracket N \rrbracket$ } \vskip1mm
\WHILE{$n\times \Delta t = t\leq T$}\vskip1mm
    
    \IF{\texttt{type}=moment}
    \STATE{Set $\psi(\boldsymbol X_t) = (\frac{1}{N}\sum_{i=1}^N X_t^i, \frac{1}{N}\sum_{i=1}^N (X_t^i)^2, \dots$\\$ \frac{1}{N}\sum_{i=1}^N (X_t^i)^{\texttt{nmom}})$}
    \ELSIF{\texttt{type}=histogram}
    \STATE{Set $\psi(\boldsymbol X_t) =$  \texttt{nbin}$^m$ dimensional tensor that counts the number of particles in each bin}
    \ELSIF{\texttt{type}=empirical}
    \STATE{Set $\psi(\boldsymbol X_t)  = (X_t^i)_{i \in \llbracket N \rrbracket}$}
\ENDIF    
    \STATE{Compute distribution embedding $m(\psi(\boldsymbol X_t))$}
    \vskip1mm
    \STATE{Set $\tilde{v}^i_{t} = \tilde{v}(t, X_t^i, m(\psi(\boldsymbol X_t)))$}
    \vskip1mm
    \STATE{Set 
    $\mu_t^N = \sum_{i=1}^{N}\delta_{X_t^i}$}
    \vskip1mm
    \STATE{Let $X^i_{t+\Delta t} = X^i_t + b(t, X^i_t, \tilde{v}^i_t, \mu_t^N) \Delta t +  \sigma \epsilon_t + \sigma^0\epsilon^0_t$}\vskip1mm
    \STATE{Set $n=n+1, t= t+\Delta t$}\vskip1mm
\ENDWHILE\vskip1mm

\STATE{\textbf{return:} $(X_t^i)_t=0,1,\dots, T,\ i \in \llbracket N \rrbracket$} \vskip1mm
\end{algorithmic}
\end{algorithm}

\vskip6pt
\noindent{\bf Training method. }
Our goal is now to minimize over $\theta = (\theta_1, \theta_2)$ the average cost of $M$ populations of size $N$: 
\begin{equation}
\begin{aligned}
    \mathbb{J}^N(\theta)=\frac{1}{M} \sum_{j=1}^M
    \bigg[\sum_{t\in \mathcal{T}}
    \frac{1}{N} \sum_{i=1}^{N}
    f\Big(t,X^{i,j, \theta}_t,
    \tilde{v}_{\theta_1},\mu^{N,j, \theta}_t\Big)\Delta t + \dfrac{1}{N}\sum_{i=1}^N g\big(X^{i,j, \theta}_T,\mu^{N,j, \theta}_T\big)\bigg],
\end{aligned}
\end{equation}
where $\tilde{v}_{\theta_1} = \tilde{v}_{\theta_1}\big(t, X_t^{i,j,\theta}, m_{\theta_2}(\psi(\boldsymbol X^{j,\theta}_t))\big)$, $\boldsymbol X^{j}_t = (X^{i,j}_t)_{i \in \llbracket N \rrbracket}$ is the particles' states in population $j$ and $\mu_t^{N,j}= \frac{1}{N}\sum_{i=1}^N \delta_{X_t^{i,j}}$ is the empirical state distribution in population $j$. Furthermore, $\tilde{v}_{\theta_1}(\cdot,\cdot,\cdot)$ and $m_{\theta_2}(\cdot)$ are neural networks to approximate the optimal control and distribution embedding. Here, $\psi(\cdot)$ is a distribution approximation method we are using (moments, histogram or empirical). 

In order to optimize over $\theta=(\theta_1, \theta_2)$, we  use Adam optimizer (Adaptive Moment Estimation algorithm) which is a variant of stochastic gradient descent. Instead of sampling $M$ populations of size $N$, we  sample one population of size $N$ at each iteration and minimize over the following cost
\begin{equation}
\begin{aligned}
\label{eq:cost_1pop_discrete_numerics}
\mathbb{J}_S^N(\theta)=\sum_{t\in \mathcal{T}}\frac{1}{N} \sum_{i=1}^{N}f\Big(t,X^{i, \theta}_t,\tilde{v}_{\theta_1},\mu^{N, \theta}_t\Big)\Delta t
     + \frac{1}{N}\sum_{i=1}^N g\big(X^{i, \theta}_T,\mu^{N, \theta}_T\big),
\end{aligned}
\end{equation}
where $\tilde{v}_{\theta_1} = \tilde{v}_{\theta_1}\big(t, X_t^{i,\theta}, m_{\theta_2}(\psi(\boldsymbol X^{\theta}_t))\big)$ and $\boldsymbol{X}^\theta_t = (X_t^{i, \theta})_{i \in \llbracket N \rrbracket}$. The detailed pseudo-code of our methods can be found in Algoritm~\ref{algo:SGD-MFC}. 

Remember that our goal is to learn an approximately optimal control and this problem is high dimensional because of using the distribution as an input. In contrast to the Master equation (see e.g.,~\cite{cardaliaguet2019master}), which is posed for all distributions, here we are concerned with the performance of the control on realistic sequences of distributions that arise due to randomness in the initial condition or the dynamics. For more details on the high dimensionality of our experiments, please refer to Appendix~\ref{app:algorithms}.

\begin{algorithm}[t]
\caption{Stochastic Gradient Descent for population-dependent MF controls with distribution embedding \label{algo:SGD-MFC}}
\textbf{Input:} Initial parameter $\theta_0$; number of iterations $K$; sequence $(\beta_k)_{k = 0, \dots, K-1}$ of learning rates; number of particles $N$

\textbf{Output:} Approximation of $\theta^*$ minimizing representative player's cost\vskip1mm
\begin{algorithmic}[1]
\FOR{$k=0,1,\dots,K-1$}\vskip1mm
    \STATE{Sample $(X^i_{t})_{t=0,\Delta t,\dots, T,\ i\in  \llbracket N \rrbracket}$} using Algorithm~\ref{algo:sim-X} with control function $\tilde{v}=\tilde{v}_{\theta_{k,1}}$, distribution embedding function $m=m_{\theta_{k,2}}$ and parameters: $N, T, \Delta t, \mu_0, \mu_0^0$, \texttt{type}, \texttt{nmom}, \texttt{nbin}\vskip1mm
    \STATE{Compute the gradient $\nabla \mathbb{J}^{N}_S(\theta_k)$ of the cost function defined in \eqref{eq:cost_1pop_discrete_numerics}}\vskip1mm
    \STATE{Set $\theta_{k+1} = \theta_k -\beta_k \nabla \mathbb{J}^{N}_S(\theta_k)$}\vskip1mm
\ENDFOR
\STATE{\textbf{return:} $\theta_K$}
\end{algorithmic}
\end{algorithm}

\section{Numerical experiments}\label{sec:numer}

\noindent{\bf Example 1: Systemic risk. }
First, we focus on the discrete time version of the systemic risk model with the common noise that is analyzed in~\cite{carmona2018probabilistic}. This model is introduced for modeling the borrowing and lending of banks. The state of the players i.e., the banks, is the logarithm of their cash reserve and the state of the representative player at time $t \in \mathcal{T}$ is denoted as $X_t$. The dynamics of the representative player's log-cash reserve is:
\begin{equation}
    X_{t+\Delta t} = X_t + \big[a(\bar X_t - X_t)+A_t\big]\Delta t + \sigma \overline{\epsilon}_t,\ X_0 \sim \mu_0,
\end{equation}
$\forall t\in \{0, \Delta t, \dots, (n-1)\Delta t\}$ and where $\overline{\epsilon}_t = \sqrt{1-\rho^2} \epsilon_t + \rho \epsilon_t^0,$ 
 for some $\rho\in[0,1]$. Here, $\epsilon_t \sim N(0, \Delta t)$ denotes the idiosyncratic noise and $\epsilon_t^0\sim \mathcal{N}(0,\Delta t)$ denotes the common noise. Furthermore, we assume $a$ is a positive constant. The control of the bank, $A_t$, is the lending and borrowing amounts at time $t$. The objective of the bank is to minimize over $A$:
\begin{equation}
\begin{aligned}
    &\mathbb{E}\Big[\sum_{t\in \mathcal{T}}\big[\dfrac{A_t^2}{2} - qA_t(\bar X_t- X_t) + \dfrac{\varepsilon}{2}(\bar X_t - X_t)^2\big]\Delta t + \dfrac{c}{2} (\bar X_T - X_T)^2\Big]
\end{aligned}
\end{equation}
where $\varepsilon, c, \lambda >0$ are constants, $\bar X_t =\int_{\mathbb{R}} x d\mu_t (x),$ for all $t\in \mathcal{T}$ where $\mu_t=\mathrm{Law}{X_t}$ and $\boldsymbol A = (A_t)_{t\in\mathcal{T}}$. Here $\varepsilon$ and $c$ balance the individual bank's behavior with the average behavior of the other banks. $q>0$ weighs the contribution of the components and helps to determine the sign of the control i.e., whether to borrow or to lend. We assume $q^2 \leq \varepsilon$ in order to guarantee the convexity of the running cost functions.

In Figure~\ref{fig:sysrisk} (left), we see that different distribution approximation approaches result in similar loss functions. In Figure~\ref{fig:sysrisk} (right), we compare our control vs. state results with the explicit solutions (cf. \cite{carmona2018probabilistic} for the explicit solutions) for the sanity check. We can see that our numerical approach is good at mimicking the explicit solutions and the approximation with the histogram is working well. In this example, the total distribution embedding input is of dimension $1000$ when using empirical approximation with FFNN (since $1000$ banks are simulated) which shows the high dimensionality of our problem. For further details on the input dimensions, please refer to Appendix~\ref{app:algorithms}.

\begin{figure}[h]
\centering
\begin{subfigure}[t]{0.42\textwidth}
\centering
    \includegraphics[width=1.1\linewidth]{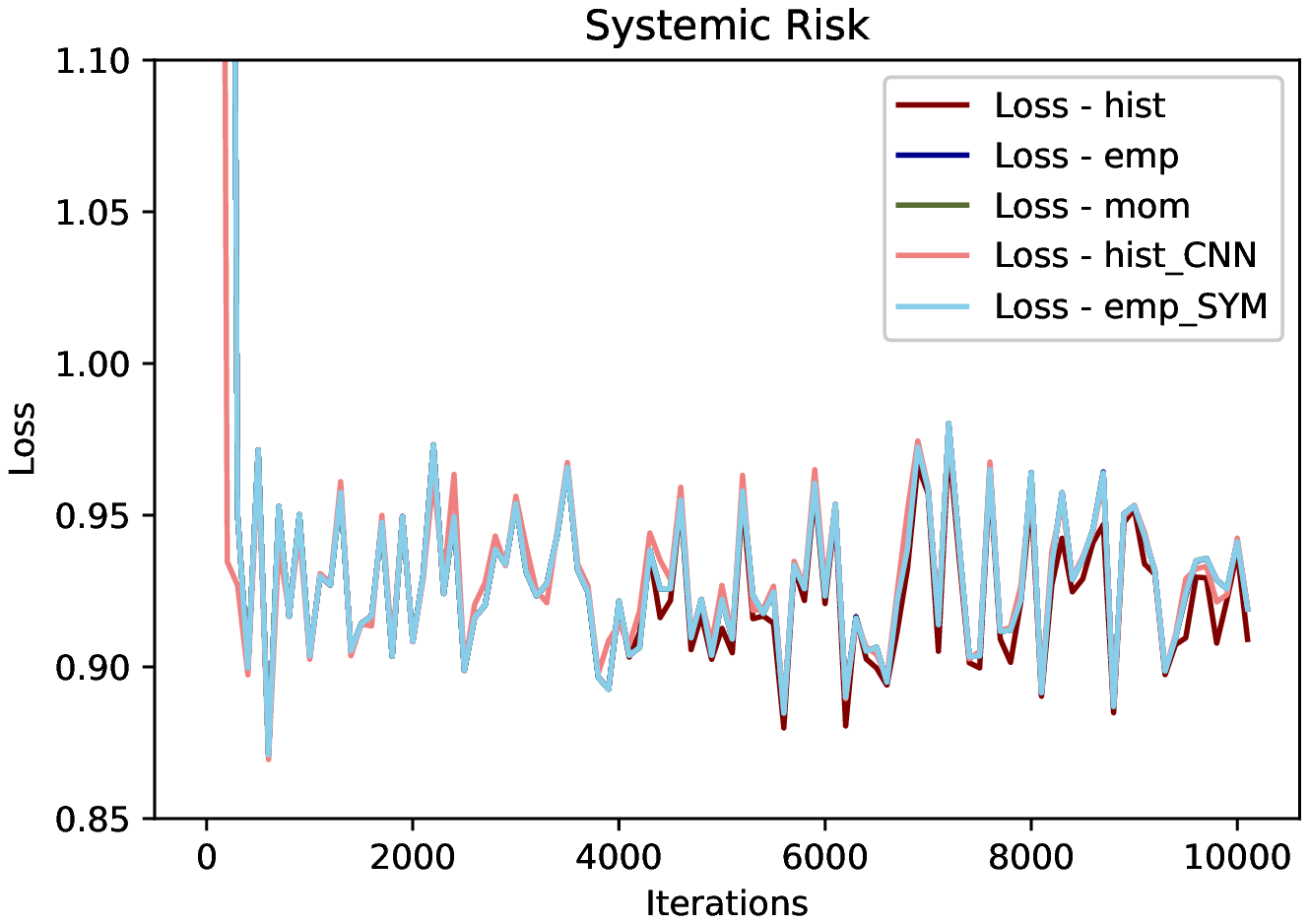}
\end{subfigure}
\hfill
\begin{subfigure}[t]{0.56\textwidth}
\centering
    \includegraphics[width=1.1\linewidth]{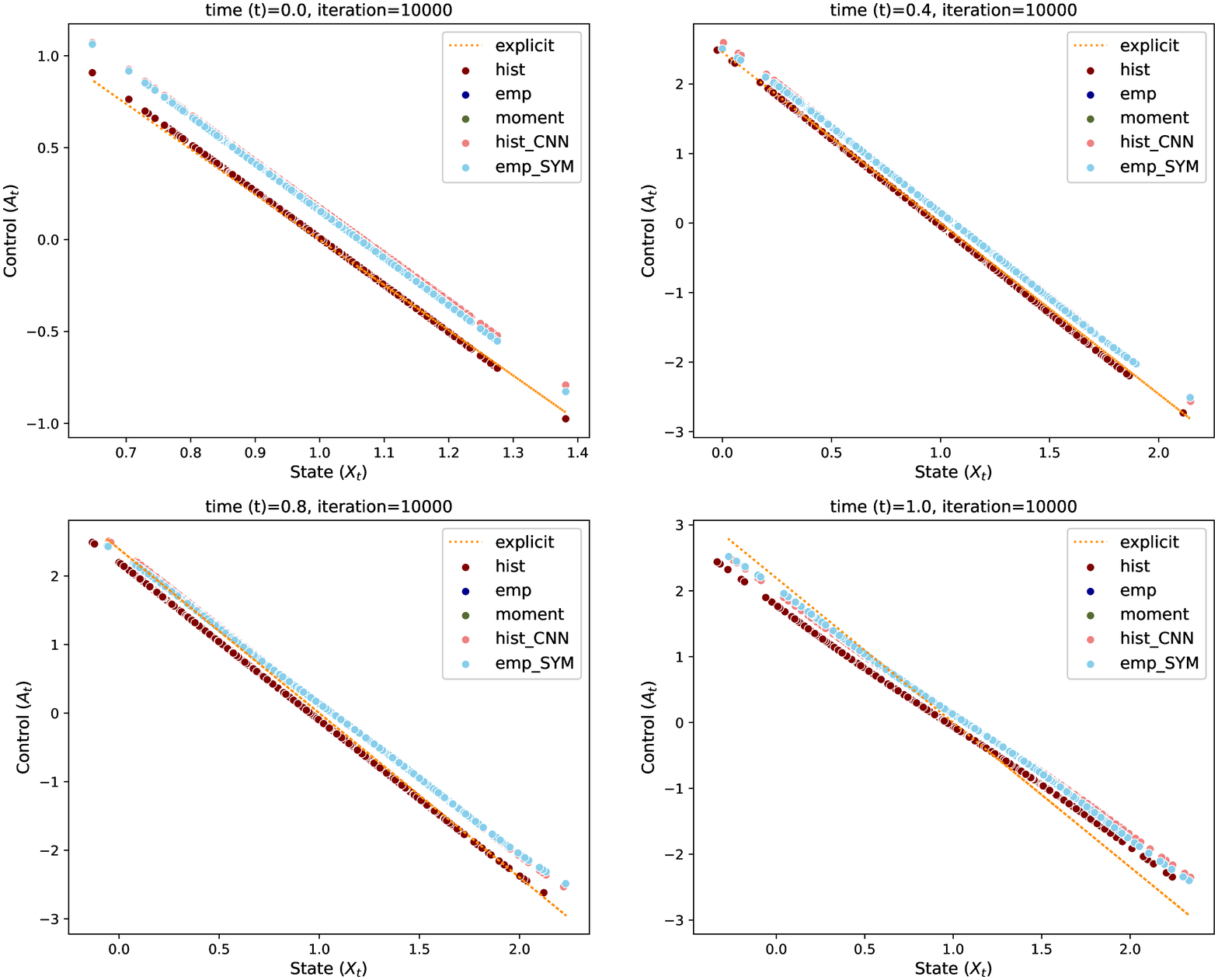}
\end{subfigure}
\caption{\small Loss comparison of different distribution approximations (left) and comparison of the numerical solutions with explicit solutions (right) in the systemic risk experiments.}
\label{fig:sysrisk}
\end{figure}

\vskip6pt
\noindent{\bf Example 2: Price impact. }
We extend the discrete time version of the price impact model given in ~\cite{carmona2018probabilistic} to incorporate trading of multiple stocks, which yields a 2D model. A representative trader controls its inventory for two different stocks, $X_t^1$ and $X_t^2$ by the rate of trading for each stock, $A_t^1$ and $A_t^2$. In this example, the common noise is at the initial condition of the dynamics. The dynamics of the representative trader's inventory is:
\begin{equation*}
    \begin{aligned}
        X_{t+\Delta t}^i = X^i_t + A_t^i \Delta t + \sigma_i \epsilon^i_t,\quad X_0^i= \tilde{X}^i_0 +x_0^i, \quad i \in \{1,2\},
    \end{aligned}
\end{equation*}
where $\epsilon_t^1 \sim \mathcal{N}(0,\Delta t)$ and $\epsilon_t^2\sim \mathcal{N}(0,\Delta t)$ are independent idiosyncratic noises, $\tilde{X}^i_0\sim \mu_0^i$ for all $i\in\{1,2\}$, and $x_0^1\sim \mathcal{N}(0, \sigma_1^0)$, $x_0^2\sim \mathcal{N}(0, \sigma_2^0)$ are common initial randomness. The price impact can be seen in the dynamics of the mid-price of Stock $i$ through the linear \textit{instantaneous market impact function}:
\begin{equation*}
    \begin{aligned}
        S_{t+\Delta t}^i &= S_t^i + h_i \bar A_t^i \Delta t + \sigma^i_0 \epsilon_t^{0,i}, \quad i \in \{1,2\},
    \end{aligned}
\end{equation*}
where $\epsilon_t^{0,i}\sim\mathcal{N}(0,\Delta t),\ i=\{1,2\}$ are the common noises which are independent from $\epsilon_t^1$ and $\epsilon_t^2$, and $h^i>0$ is the price impact. The interactions are coming through the controls' mean $\bar{A}_t^i$. Our numerical approach still can be adapted to such extended MFC. The amount of cash held by the representative trader at time $t$ is denoted by $K_t$ and it has the following dynamics:
\begin{equation*}
    K_{t+\Delta t} = K_t -[A_t^1 S_t^1 + A_t^2 S_t^2 + \frac{c_\alpha}{2}((A_t^1)^2+(A_t^2)^2)]\Delta t
\end{equation*}
where $c_\alpha>0$ is a constant coefficient. Here, for the cost of trading at the chosen rate, we used a quadratic cost which corresponds to flat order book.
The representative trader wants to maximize their expected wealth at the terminal time ($V_T = K_T + X^1_TS^1_T + X_T^2S_T^2$ where $T=n\Delta t$) and they are subject to liquidation constraints i.e., they want to minimize the shares held at each time $t\in \mathcal{T}$. Therefore, the representative player has the following cost, where $c_X, c_g>0$ and $\boldsymbol{A} = (A_t^1, A^2_t)_{t\in\mathcal{T}}$:
\begin{equation*}
\begin{aligned}
    \inf_{\boldsymbol A}  \sum_{i=1,2}\mathbb{E}\Bigg[ \sum_{t\in \mathcal{T}}\Big[&\dfrac{c_\alpha}{2}(A_t^i)^2 + \dfrac{c_X}{2} (X_t^i)^2
    - h_iX_t^i\bar{A}_t^i\Big] \Delta t 
    + \dfrac{c_g}{2} (X_T^i)^2\Bigg].
\end{aligned}
\end{equation*}

With Figure~\ref{fig:primpact} (left), we can compare the performance of different distribution approximation methods. We also added \textit{nodist} case where we do not use distribution as an input in our control approximation i.e., control is a state-dependent control instead of population-dependent control. We can see that empirical approximation with FFNN is good at minimizing the cost before the other methods. After around $7000$ iterations, empirical approximation with SYM and moments approximation with FFNN also improves the results further than not using any distribution approximation as an input in the optimal control, i.e., \textit{nodist} case. In Figure~\ref{fig:primpact} (right), we visualize 3 different distribution summary methods used: empirical distribution, moments, and histogram. The scatter plot represents the positions of the particles at a fixed time and visualizes the empirical approximation, the histograms on the axes represent the histogram approximation and the point that the horizontal and vertical green lines interact is the average of positions that represents the first moment approximation. In this example, the total distribution embedding input is of dimension $2*800$ when using empirical distribution with FFNN (since $800$ traders with a state dimension of $2$ are simulated) which shows the high dimensionality of our problem. 

\begin{figure}[h]
\centering
\begin{subfigure}[t]{0.5\textwidth}
\centering
    \includegraphics[width=\linewidth]{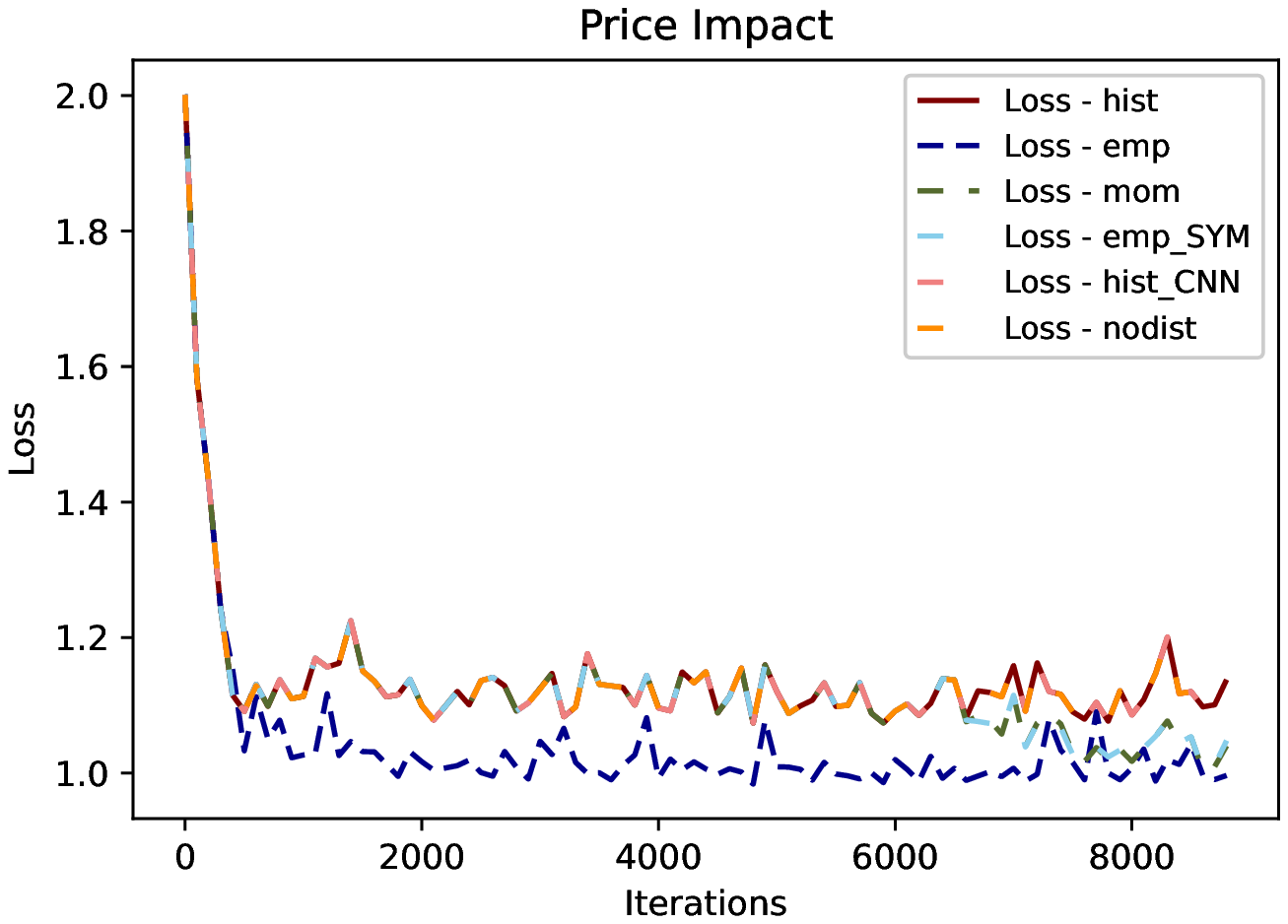}
\end{subfigure}
\hfill
\begin{subfigure}[t]{0.49\textwidth}
\centering
\includegraphics[width=0.9\linewidth]{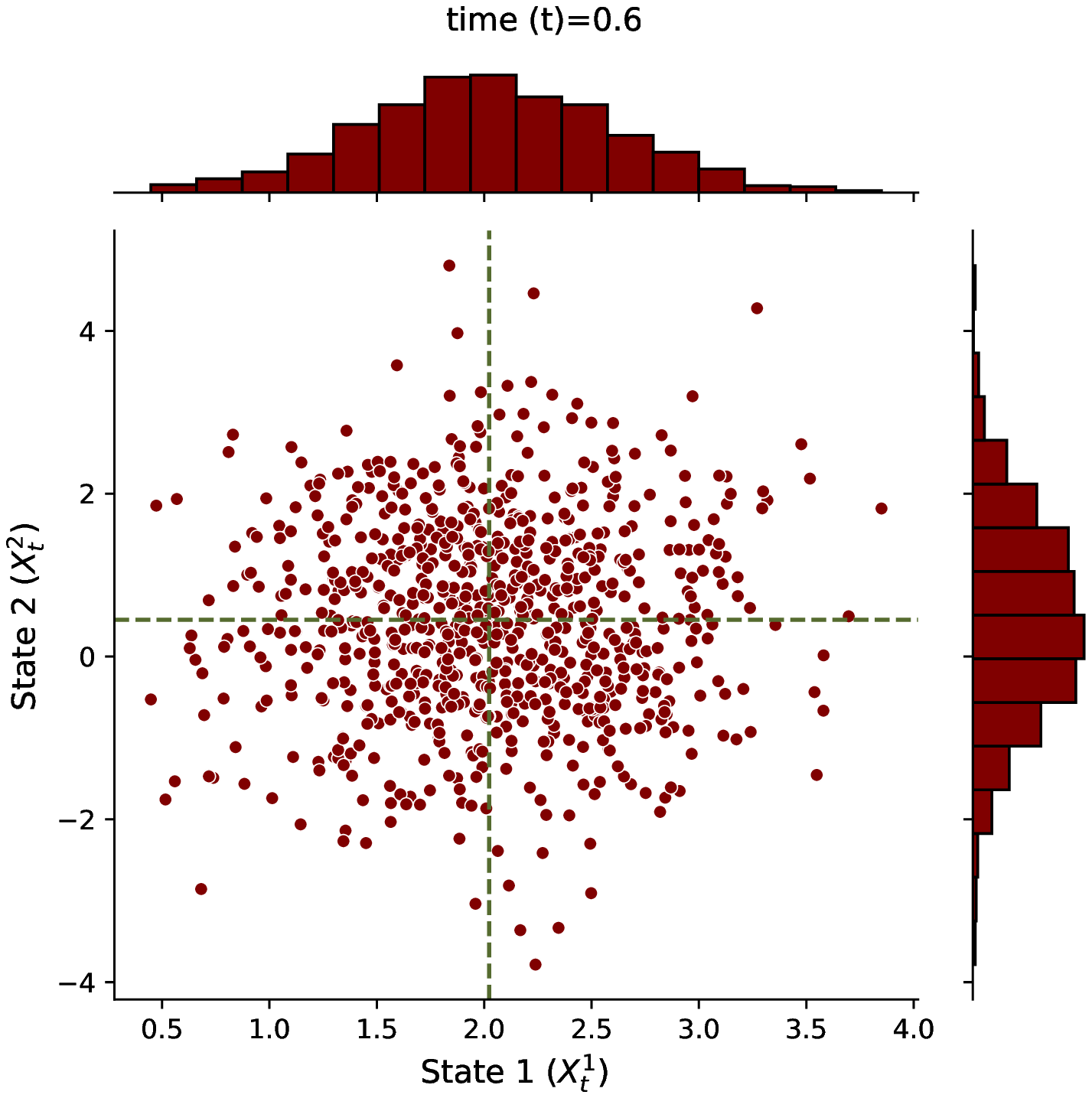}
\end{subfigure}
\caption{\small Loss comparison of different distribution approximations (left) and visualization of the state distribution approximations at time $t=0.6$ (right) in the price impact experiments.}
\label{fig:primpact}
\end{figure}

\noindent{\bf Example 3: Crowd motion with congestion. }
Finally, we look at a more complex crowd motion problem. The representative player controls her position at time $t$, $X_t$, by choosing her velocity, $A_t$. Therefore, we consider the following dynamics for the representative player:
\begin{equation}
    X_{t+\Delta t} = X_t + A_t \Delta_t + \sigma \epsilon_t,\quad X_0=\tilde{X}_0 +x_0,
\end{equation}
where $\epsilon_t\sim\mathcal{N}(0, \Delta t)$ is the idiosyncratic noise, $\tilde{X}_0\sim \mu_0$ is the player's state and $x_0\sim\mathcal{N}(0, \sigma_0)$ represents the common initial randomness. The cost function is as follows:
\begin{equation*}
\begin{aligned}
    \inf_{\boldsymbol A} \mathbb{E} \Bigg[ \sum_{t\in \mathcal{T}} \Big( \frac{1}{2}(c_0+\rho\star\mu_t)(X_t) |A_t|^2 + \ell(X_t, (c_0+\rho\star\mu_t)(X_t))\Big)\Delta t + g(X_T)\Bigg]
\end{aligned}    
\end{equation*}
where $c_0>0$ is a constant, $\rho$ is a smooth kernel (e.g., Gaussian), $\star$ denotes the convolution and $\boldsymbol A=(A_t)_{t\in\mathcal{T}}$. The first part of the running cost models congestion in the sense that it is more expensive to move in a crowded region (i.e., a region with high density) than in a non-crowded one. In this example, we take:
$
    \ell(x,m) = c_1 \|x - x_{\mathrm{target}}\|^2 + c_2 m, 
$ 
where $c_1,c_2>0$ are constants and $x_{\mathrm{target}} \in \mathbb{R}^d$ is a target position. Similarly, for $g$ we take 
$
    g(x) = c_3 \|x - x_{\mathrm{target}2}\|^2, 
$  
where $c_3>0$ is a constant and $x_{\mathrm{target}2} \in \mathbb{R}^d$ is another (or the same) target position.
We focus on a $d=2$ dimensional example. The parameters used in the experiment can be found in the Appendix~\ref{app:parameters}. In Figure~\ref{fig:crowdmotion} (left), we have the loss comparison for different approximation methods (including the \textit{nodist} case where the control is a state-dependent control instead of a population-dependent control). We can see that including distribution approximation as an input to our optimal control neural network improves the learning. In other words, population-dependent controls outperform the state-dependent control. We can also see that CNN with histogram approximation and SYM with empirical approximation are outperforming FFNN with histogram and empirical approximations. This shows that in this example using different NN architectures other than FFNN improves the learning. In Figure~\ref{fig:crowdmotion} (right), we compare the loss of FFNN with histogram approximations with different number of bins (2, 4 and 16). We can see that as the number of bins increases, the learning improves since more information is captured.

\begin{figure}[h]
\centering
\begin{subfigure}[t]{0.49\textwidth}
    \centering
    \includegraphics[width=\linewidth]{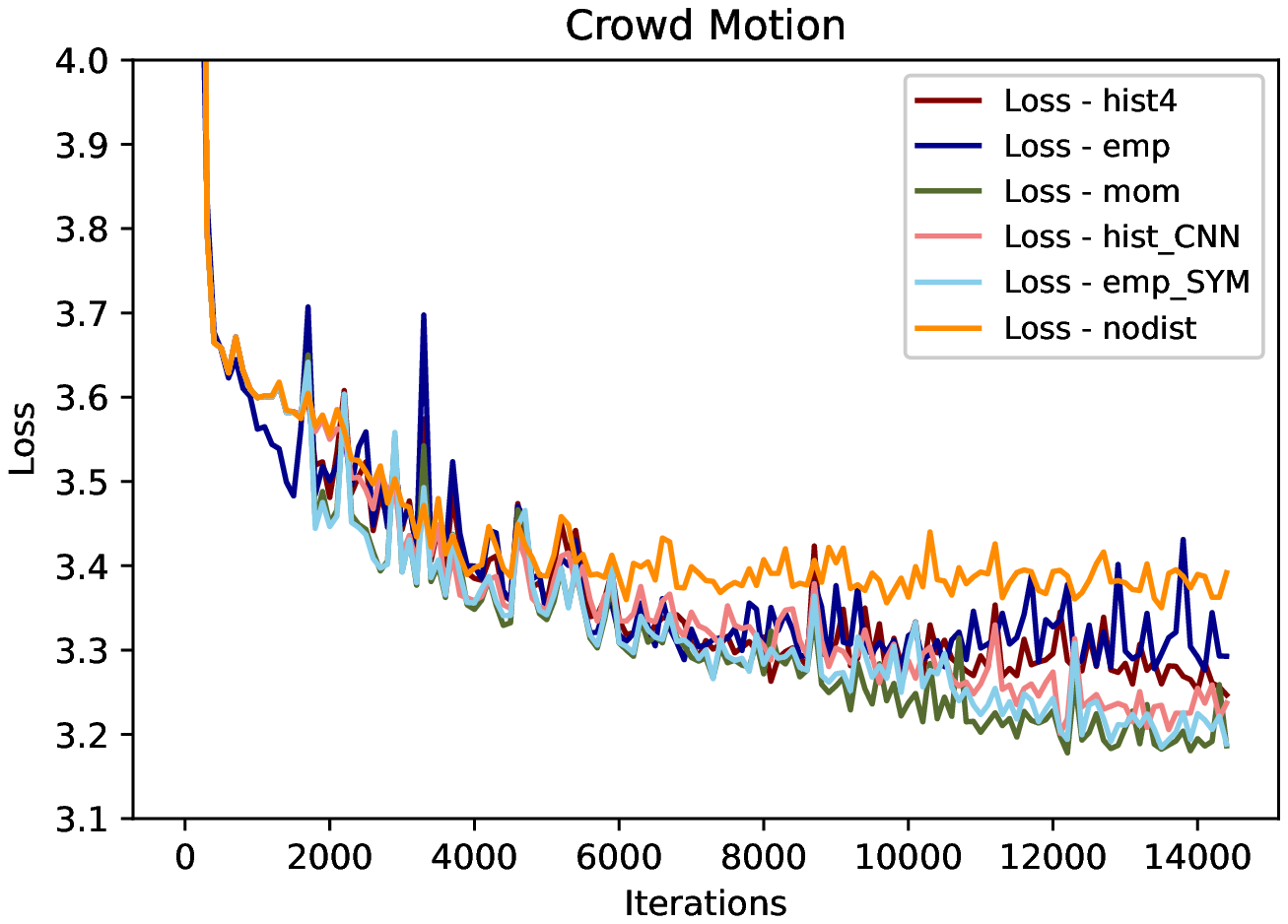}
\end{subfigure}
\hfill
\begin{subfigure}[t]{0.49\textwidth}
    \centering
    \includegraphics[width=\linewidth]{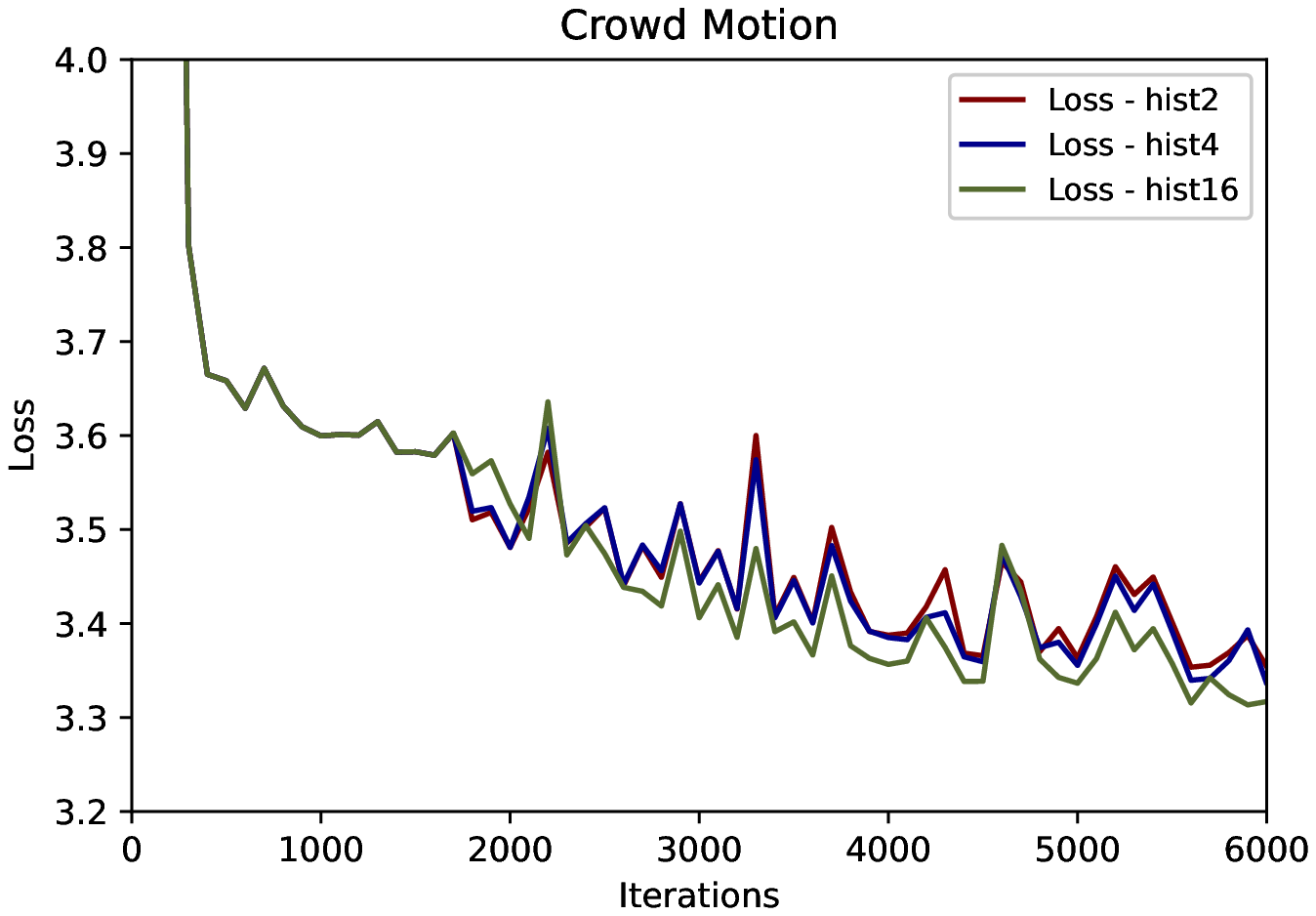}
\end{subfigure}
    \caption{\small Loss comparison of different distribution approximations (left) and loss comparison of different histogram bin number (right) in the crowd motion experiments.}
    \label{fig:crowdmotion}
\end{figure}

\section{Conclusion and limitations}
\label{sec:conclusion}
\noindent{\bf Conclusion. }
In this paper, we propose several algorithms to seek for optimal population-dependent controls, for solving MFC problems with common noise. Analysis of the convergence for the proposed algorithms and more general algorithms is provided. The effectiveness of the algorithms is justified by three concrete applications. In our first example, we look at an explicitly solvable problem and compare the results of our numerical approach with the explicit solutions to sanity check. In the second experiment, we look at a more complex models with increased dimensions for the states and controls. In this example, we show that population-dependent controls outperform state dependent controls. In our last example, we look at a complex crowd motion model beyond linear quadratic setup. We show that again population-dependent controls outperform state dependent controls and furthermore, the different NN architectures depending on the distribution approximation improve learning. For example, using symmetric NN with empirical approximation improves the results further than using FFNN with empirical approximation and using convolutional NN with histogram approximation improves the results further than using FFNN with histogram approximation.
\vskip6pt
\noindent{\bf Limitations. } For future work, we plan to close the gap between the original problem $\inf_{v\in \VV} J(v)$ and the restricted problem after the change of the admissible set from $\VV$ to $\VV^{\psi}$ $\inf_{v\in \VV^{\psi}} J(v)$. In other words, we plan to show the convergence of the optimal control of the restricted problem to the optimal control of the restricted problem. It would also be interesting to analyze theoretically and numerically the sample complexity both in terms of the size $N$ of one population and in terms of the number of population-wide samples $M$.

\section{Related work}
\label{sec:relatedwork}

Our algorithms build upon the method proposed in~\cite{carmona2022convergence}, which solves an MFC problem by training a neural network control using Monte Carlo simulations for a population of particles. However, in their work, the analysis is done for controls which are functions of the individual state only (i.e., state-dependent controls). 
In the experiments, it is shown that the method can handle simple forms of common noise (e.g., in linear-quadratic problems or problems in which the common noise realizations have a finite number of possible values), but it cannot directly handle general forms of population-dependence. \cite{min2021signatured} developed a fictitious play method for MFGs with common noise using signatures, which can cover cases where the interactions are through moments but not general forms of dependence on the mean field. \cite{perrin2022generalization} solve MFGs by learning a neural network for the Q-function (from which the policy can be deduced) taking as input a histogram representing the population distribution. However, those approaches are specific to finite-state problems (or histogram-based approximations) and their examples do not include common noise. \cite{germain2022deepsets} use a population-based algorithm and symmetric neural networks to solve the dynamic programming equation arising in MFC problems. One drawback of this approach is that the trajectory of the mean-field flow is not known when one uses a backward induction scheme. So their method requires learning the solution over many distributions that will not be actually useful. Our approach avoids this problem by simulating trajectories in a forward fashion, so that the neural network is mostly trained on relevant distributions. Distribution approximation through neural networks and its application to solve MFC problems have also been a focus of the recent works~\cite{pham2023meanfield,pham2022meanfield}, respectively. The main differences of these works from our work is three-fold: (1) Our main motivation is to treat generic MFC with common noise in continuous spaces, which is not covered by these works; (2) Our theoretical analysis takes into account the optimal value function, while Theorems 2.1 and 2.2 in~\cite{pham2023meanfield} deal with function approximation but not the optimal control aspects; (3) We provide multi-dimensional numerical examples while \cite{pham2022meanfield} has only 1D state space examples, and the extension from 1 to 2 state dimensions is (numerically) not trivial due to the combination of distribution approximation and NN controls. This creates a vast increase in the dimension of the NN inputs. For example, for the feedforward neural network (FFNN) with empirical approximation the input dimension goes from $N$ to $2N$. This also motivates us to use different NN architectures such as CNN and we showed that in the crowd motion experiment, CNN with histograms outperformed FFNN with histograms.\cite{carmonalaurieretan2019model,gu2021meanfieldcontrolsQ} propose RL algorithms for discrete time MFC problems through the lens of mean field Markov decision processes (MFMDP) in the infinite horizon discounted setting. Our approach can tackle continuous time problems with time-dependent controls, which is generally more challenging. 
The question of model-free methods has also received a growing interest in the context in mean field games, see e.g.~\cite{lauriere2022learning} for a recent overview. \cite{guo2019learning} proposes a Q-learning algorithm with Boltzmann policy with analysis of convergence property and computational complexity and applies to multi-agent reinforcement learning problem. The solution notion is different since it is a Nash equilibrium, which is different from the social optimum we study in this work.

\nocite{lecoutre1985l2}

{

}

\clearpage
\appendix
\section{Proof of theoretical results in Section \ref{sec:sens}}\label{app:proof}
Let us first present the following lemma which is fundamental to the proof of results in Section \ref{sec:sens}.
\begin{lemma}[Closeness of processes under perturbation / approximation of the law]\label{lem:1}
Suppose for $\tilde{b}$, $\sigma$, $\sigma_0$, we have
\begin{equation*}
    \begin{aligned}
    |\tilde{b}(t,x,\mu)-\tilde{b}(t,y,\nu)|\leq C\big(|x-y|+\cW_2(\mu,\nu)\big),\;
    \\
    |\sigma(t,x)-\sigma(t,y)|+|\sigma_0(t,x)-\sigma_0(t,y)|\leq C|x-y|,
    \end{aligned}
\end{equation*}
for some constant $C >0$.
Consider the SDE:
\begin{equation*}
    \d X_t=\tilde{b}(t,X_t,\mu_t)\d t+\sigma(t,X_t) \d W_t+\sigma_0 (t,X_t)\d W^0_t,
\end{equation*}
where $\mu_t=\mathrm{Law}(X_t|\F^0)$ and the perturbed SDE,
\begin{equation*}
    \d \hat X_t=\tilde{b}(t,\hat X_t,\tilde\mu_t)\d t+\sigma(t,\hat X_t) \d W_t+\sigma_0(t,\hat X_t) \d W^0_t,
\end{equation*}
where $\tilde\mu_t$ is a proper approximation of $\hat\mu_t=\mathrm{Law}(\hat X_t|\F^0)$ in the sense that $\int_0^t\mathbb{E}[\cW_2(\tilde\mu,\hat\mu)^2]\d t\leq \delta$ almost surely.
Then, we have
\begin{equation*}
    \E\big[(X_t-\hat X_t)^2\big]+\E\big[\cW_2(\mu_t,\hat\mu_t)^2\big] \leq \tilde C\delta,  \qquad t \in [0,T],
\end{equation*}
 for some constant $\tilde C>0$ depending only on the Lipschitz constants of $\tilde{b}, \sigma$ and $\sigma_0$ and on $T$. 
\end{lemma}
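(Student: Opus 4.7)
The plan is to exploit the fact that the two SDEs are driven by the \emph{same} Brownian motions $W$ and $W^0$, so that pathwise differences can be controlled by standard Itô-type estimates, and then to transfer this pathwise control to a bound on conditional Wasserstein distance via the natural coupling of the two processes.

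First I would subtract the integral forms of $X_t$ and $\hat X_t$, square, take expectation, apply $(a+b+c)^2 \le 3(a^2+b^2+c^2)$, use Cauchy--Schwarz on the drift integral and Itô's isometry on the two stochastic integrals. Invoking the Lipschitz hypotheses on $\tilde b$, $\sigma$ and $\sigma_0$ yields a bound of the form
\begin{equation*}
\E[(X_t-\hat X_t)^2] \;\le\; C \int_0^t \E[(X_s-\hat X_s)^2]\,\d s \;+\; C \int_0^t \E[\cW_2(\mu_s,\tilde\mu_s)^2]\,\d s,
\end{equation*}
for a constant $C$ depending only on $T$ and on the Lipschitz constants.

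Next I would handle the Wasserstein term by the triangle inequality, $\cW_2(\mu_s,\tilde\mu_s)^2 \le 2\cW_2(\mu_s,\hat\mu_s)^2 + 2\cW_2(\hat\mu_s,\tilde\mu_s)^2$. The second piece is absorbed directly by the hypothesis $\int_0^t \E[\cW_2(\hat\mu_s,\tilde\mu_s)^2]\,\d s \le \delta$. The first piece is controlled by the key coupling observation: conditionally on $\F^0$, the pair $(X_s,\hat X_s)$ provides a coupling of $\mu_s$ and $\hat\mu_s$, so that
\begin{equation*}
\cW_2(\mu_s,\hat\mu_s)^2 \;\le\; \E\big[(X_s-\hat X_s)^2 \,\big|\, \F^0_s\big],
\end{equation*}
and, after taking unconditional expectation, $\E[\cW_2(\mu_s,\hat\mu_s)^2] \le \E[(X_s-\hat X_s)^2]$.

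Substituting back produces a scalar inequality of the form $\phi(t) \le C_1\int_0^t \phi(s)\,\d s + C_2\delta$ with $\phi(t) := \E[(X_t-\hat X_t)^2]$, to which Grönwall's lemma gives $\phi(t) \le \tilde C \delta$ uniformly in $t \in [0,T]$; the coupling estimate above then upgrades this immediately to the desired bound on $\E[\cW_2(\mu_t,\hat\mu_t)^2]$, and summing the two yields the claim. The only delicate point is the conditional Wasserstein coupling identity, which is the one place where the $\F^0$-conditional structure enters; everything else is a textbook Grönwall argument for coupled SDEs under the same noise.
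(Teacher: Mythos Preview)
Your proposal is correct and follows essentially the same strategy as the paper: Lipschitz bounds on the difference process, the triangle inequality $\cW_2(\mu_s,\tilde\mu_s)^2 \le 2\cW_2(\mu_s,\hat\mu_s)^2 + 2\cW_2(\hat\mu_s,\tilde\mu_s)^2$, the key conditional coupling observation $\E[\cW_2(\mu_s,\hat\mu_s)^2] \le \E[(X_s-\hat X_s)^2]$ via the tower property, and Gr\"onwall. The only cosmetic differences are that the paper applies It\^o's formula directly to $(X_t-\hat X_t)^2$ rather than squaring the integral form and invoking It\^o isometry, and that the paper runs Gr\"onwall twice (first on $\E[(X_t-\hat X_t)^2]$, then on $\E[\cW_2(\mu_t,\hat\mu_t)^2]$) whereas you substitute the coupling bound first and close with a single Gr\"onwall; neither variation changes the substance of the argument.
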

\begin{proof}

Using Ito's formula, we have
\begin{equation*}
    \begin{aligned}
    \d (X_t-\hat X_t)^2
    =&\Big(2(X_t-\hat X_t)\big(\tilde{b}(t,X_t,\mu_t)-\tilde{b}(t,\hat X_t,\tilde\mu_t)\big)
    \\
    &+\big(\sigma(t,X_t)-\sigma(t,\hat X_t)\big)^2+\big(\sigma_0(t,X_t)-\sigma_0(t,\hat X_t)\big)^2\Big) \d t
    \\
    &+2(X_t-\hat X_t)\big(\sigma(t,X_t)-\sigma(t,\hat X_t)\big)\d W_t
    \\
    &+2(X_t-\hat X_t)\big(\sigma_0(t,X_t)-\sigma_0(t,\hat X_t)\big)\d W^0_t.
    \end{aligned}
\end{equation*}

Taking expectation and using the assumptions on $\tilde{b}$, $\sigma$ and $\sigma_0$, we have
\begin{equation*}
    \begin{aligned}
    \d \E\big[(X_t-\hat X_t)^2\big]
    \leq\Big((2C+2C^2+Cc)\E\big[(X_t-\hat X_t)^2\big]
    +\frac{C}{c}\E\big[\cW_2(\mu_t,\tilde\mu_t)^2\big]\Big)\d t,
    \end{aligned}
\end{equation*}
for any constant $c>0$. Using Gr\"onwall's lemma, we get
\begin{equation*}
    \begin{aligned}
    \E\big[(X_t-\hat X_t)^2\big]
    \leq& \frac{Ce^{t(2C+2C^2+Cc)}}c\int_0^t\E\big[\cW_2(\mu_s,\tilde\mu_s)^2\big]\d s
    \\
    \leq&\frac{Ce^{t(2C+2C^2+Cc)}}c\int_0^t\E\big[\cW_2(\mu_s,\hat\mu_s)^2\big]\d s
    + \delta\frac{ tCe^{t(2C+2C^2+Cc)}}{c}.
    \end{aligned}
\end{equation*}
Using tower property, we have
\begin{equation}\label{equ1}
    \begin{aligned}
   \E\big[\cW_2(\mu_t,\hat\mu_t)^2\big] 
   \leq& \E\Big[\E\big[(X_t-\hat X_t)^2\big|\F^0\big]\Big]=\E\big[(X_t-\hat X_t)^2\big]
   \\
   \leq&\frac{Ce^{t(2C+2C^2+Cc)}}c\int_0^t\E\big[\cW_2(\mu_s,\hat\mu_s)^2\big]\d s 
+ \delta\frac{ tCe^{t(2C+2C^2+Cc)}}{c}.
   \end{aligned}
\end{equation}
Using Gr\"onwall's lemma again, we have
\begin{equation*}
    \E\big[\cW_2(\mu_t,\hat\mu_t)^2\big]\leq C_T \delta,
\end{equation*}
for some constant $C_T$. Now plugging back into \eqref{equ1}, and using the notation $C_T$ as a general constant, we get
\begin{equation*}
    \E\big[(X_t-\hat X_t)^2\big]\leq C_T \delta.
\end{equation*}
\end{proof}

\noindent\textbf{More comments on Remark \ref{rmk:sigmaconstant}:} In our model, $\sigma$ and $\sigma^0$ are taken as constants for the simplicity in presentation. Thanks to the above lemma, we can see that the model can be directly extended to the version where $\sigma$ and $\sigma^0$ are functions of time and state. It can be extended to the most general formulation (i.e., depending also on distribution).

\vskip12pt
\noindent\textbf{Proof of Proposition \ref{prop_n}.} 
This is a corollary of  Theorem 2.12 in \cite{carmona2018probabilistic2} that provides a bound on two terms, which combined, yield our result. More precisely, let us consider the left-hand side of their bound.  The second term gives, in the notation of our Proposition 3.2, $\cW_2(\bar\mu^N_t, \mu_t)$. In addition, the first term is, in our notation, $\mathbb{E}[\sup_{t} |X^i_t - X_t|^2]$ provided we take $W = W^i$. This term gives an upper bound on $\cW_2(\mathrm{Law}(X^i_t|\mathcal{F}^0_t), \mathrm{Law}(X_t|\mathcal{F}^0_t))^2 = \cW_2(\hat{\mu}_t, \mu_t)^2$, since for two random variables $Y$ and $Z$, we have $\cW_2(Y,Z) \le \mathbb{E}[|Y-Z|^2]^{1/2}$. By triangle inequality, we obtain the bound on $\cW_2(\bar\mu^N_t, \hat{\mu}_t)$.
\qed
\vskip12pt
\noindent\textbf{Proof of Corollary \ref{cor_mainclose} and \ref{cor_lim}.} Note that for any individual process $X^{i,v}$,
\begin{equation*}
    \d X^{i,v}_t=b\big(t,X^{i,v}_t,A^{i,v}_t, \mu^{N,v}_t\big)\d t+ \sigma\d W^i_t+\sigma_0\d W_t^0.
\end{equation*}
It is a perturbed process of the original process $X^{v}$ given in~\eqref{eq:process-X-v-continuous}. 
It satisfies the bound:
$$
\int_0^T\mathbb{E}[\cW_2(\text{Law}(X_t^{i,v}|\F^0),\mu^{N,v}_t)^2]\d t\leq CT\delta_N,
$$
for $i=1,\dots,N$ because of Proposition \ref{prop_n}. Then, we are able to combining Theorem \ref{Thm_mainclose} and Proposition \ref{prop_n} and moreover, in Corollary \ref{cor_lim}, for any $v\in\VV^\psi$, function $\tilde b$ defined in the proof of Theorem \ref{Thm_mainclose} is still Lipschitz. Therefore, we obtain both Corollary \ref{cor_mainclose} and \ref{cor_lim}.\qed
\vskip 12pt
\noindent\textbf{More explanation of Remark \ref{rmk_mom}. } To obtain the Lipschitz property of the moment functions, we should actually use the truncated moments in practice which will not change the results significantly. Therefore, the moment functions we consider become $\mathbb{E}\big[X^k\mathbbm{1}_{\{|X|\leq M\}}\big]$ for some $M$. This quantity is a Lipschitz function of the distribution of $X$ because, if $X$ and $Y$ are two random variables with distribution $\mu_X$ and $\mu_Y$, respectively, we have:
\begin{equation*}
\begin{aligned}
     \Big|\mathbb{E}\big[X^k\mathbbm{1}_{\{|X|\leq M\}}\big]-\mathbb{E}\big[Y^k\mathbbm{1}_{\{|Y|\leq M\}}\big]\Big|&\leq C_M\E[|X-Y|]\\
     &\leq C_M\E[|X-Y|^2]^{\frac12}\\
     &\leq C_M\cW_2(\mu_X,\mu_Y).   
\end{aligned}
\end{equation*}
\clearpage

\section{Further neural network implementation details}
\label{app:algorithms}

 We added below the neural network architectures that are used in every experiment (with every approximation type):
 \vskip12pt
        \noindent\textbf{Systemic Risk:}
        \vskip12pt
            \begin{center}
            \begin{tabular}{ |l |c| }
            \hline
             \textbf{Experiment Type} & \textbf{Distribution Embedding NN Arhitecture}  \\ \hline
             histogram + FFNN & 4 hidden layers of 100 nodes (sigmoid) + 1 output layer of 5 nodes (linear)  \\  \hline
             empirical + FFNN & 4 hidden layers of 100 nodes (sigmoid) + 1 output layer of 5 nodes (linear)   \\  \hline 
             moment + FFNN & 4 hidden layers of 100 nodes (sigmoid) + 1 output layer of 5 nodes (linear)   \\  \hline 
             & 3 one-dim. convolutional layers with (1x8), (1x4), (1x2) kernels (sigmoid)+\\  
             histogram + CNN &   1 flattening + 1 hidden layer of 100 nodes (sigmoid) +\\
             &1 output layer of 5 nodes (linear)\\  \hline 
             empirical + SYM & 4 hidden layers of 100 nodes (sigmoid) + 1 output layer of 5 nodes (linear)  \\  \hline 
            \end{tabular}
            \end{center}
        where FFNN is feedforward neural networks, CNN is convolutional neural network, SYM is symmetric neural network. The neural network architecture for control approximation is \emph{common to all the distribution approximation types} and is as follows: 4 hidden layers of 100 nodes (sigmoid) + 1 output layer of 1 node (linear).\\

       \noindent \textbf{Price Impact and Crowd Motion:}
            \begin{center}
            \begin{tabular}{ |l |c| }
            \hline
             \textbf{Experiment Type} & \textbf{Distribution Embedding NN Arhitecture}  \\ \hline
             histogram + FFNN & 4 hidden layers of 100 nodes (sigmoid) +1 output layer of 5 nodes (linear)  \\  \hline
             empirical + FFNN & 4 hidden layers of 100 nodes (sigmoid) + 1 output layer of 5 nodes (linear)   \\  \hline 
             moment + FFNN & 4 hidden layers of 100 nodes (sigmoid) + 1 output layer of 5 nodes (linear)   \\  \hline 
             & 3 two-dim. convolutional layers with (8x8), (4x4), (2x2) kernels (sigmoid) + \\ 
             histogram + CNN &  1 flattening +  1 hidden layer of 100 nodes (sigmoid) +\\
             &1 output layer of 5 nodes (linear)\\  \hline 
             empirical + SYM & 4 hidden layers of 100 nodes (sigmoid) + 1 output layer of 5 nodes (linear)  \\  \hline 
            \end{tabular}
            \end{center}
        We approximate control 1 and control 2 by using 2 different neural networks with the same architecture: 4 hidden layers of 100 nodes (sigmoid) + 1 output layer of 1 node (linear).
\vskip12pt

\noindent \textbf{Further information on ``empirical + SYM" distribution embedding NN architecture:}

The symmetric neural network architecture that we use is of the following form, which ensures that it is invariant with respect to permutations of the positions: let $x = (x^1,\dots,x^N)$ be the vector of positions for the $N$ particles, each of them in dimension $d$. In the notations of Section 4.1, the neural network is of the form:
    $$
        m_{\theta_2}(x) = \Phi_2\left( \frac{1}{N} \sum_{i=1}^N\Phi_1(x^i;\theta_{2,1});\theta_{2,2}\right),  \qquad \theta_2 = (\theta_{2,1},\theta_{2,2})
    $$
    where $\Phi_1(\cdot; \theta_{2,1}): \mathbb{R}^d \to \mathbb{R}^{d_I}$ is a neural network with parameters $\theta_{2,1}$ (in the implementation of ``empirical + SYM'', it is the 4 hidden layers  and $d_I = 100$), and $\Phi_2(\cdot; \theta_{2,2}): \mathbb{R}^{d_I} \to \mathbb{R}^{m}$ (in the implementation of ``empirical + SYM'', this is the output layer).

    In contrast, the empirical + FFNN architecture is of the form: 
    $$
        m_{\theta_2}(x) = \Phi\left( (x^1,\dots,x^N);\theta_{2}\right).
    $$
      where $\Phi(\cdot, \theta_2): \mathbb{R}^{N\times d} \to \mathbb{R}^{m}$ is a neural network with parameters $\theta_{2}$ (in the implementation of ``empirical +FFNN'', it is the 4 hidden layers and 1 output layer).

\vskip12pt
\noindent \textbf{Details on the dimensionality of the problem:}

The training and validation set sizes in the experiments can be found below:

\begin{center}
   {\small
\begin{tabular}{ |l |c| c |}
\hline
&\textbf{Training set}& \textbf{Validation set} \\ \hline
\textbf{Systemic Risk} & 1 population of size 1000 for each iteration & 1 population of size 1000 \\ \hline 
\textbf{Price Impact} & 1 population of size 800 for each iteration & Avg. loss over 5 populations of size 800 \\ \hline
\textbf{Crowd Motion} & 1 population of size 800 for each iteration & Avg. loss over 5 populations of size 800 \\ \hline
\end{tabular}} 
\end{center}

The input dimension details for the Distribution Embedding NN for each experiment can be found below.

\begin{center}{\small
\begin{tabular}{ |l |c| c| c| }
\hline
    \textbf{Experiment Type} & \textbf{Systemic Risk} & \textbf{Price Impact} & \textbf{Crowd Motion}  \\ \hline
 histogram + FFNN &  $5$ & $256$ & $16$\\  \hline
 empirical + FFNN &  $1000$ & $1600$ & $1600$\\  \hline 
 moment + FFNN &    $1$ & $2$ &$2$\\  \hline 
 histogram + CNN &  $32$ & $16 \times 16$& $16\times 16$\\ \hline 
 empirical + SYM &   $1$ & $2$& $2$\\  \hline 
\end{tabular}}
\end{center}

For all types of the distribution embedding NNs, the output dimension is set to 5 and it is inputted in the control approximation NNs by concatenating it with the time and state of the particle. Therefore input dimensions of the control approximation NN is 1+1+5 for the systemic risk experiment and 1+2+5 for the price impact and crowd motion experiments. As it is mentioned in the main text, especially for the empirical + FFNN implementation, we encounter a very high dimensional problem where the total number of input dimensions are 2+1000 (1 dimension for time, 1 dimension for state and 1000 dimension for the empirical approximation) for the systemic risk experiment, and 3+2*800 (1 dimension for time, 2 dimension for state and 2*800 dimension for the empirical approximation) for the price impact and crowd motion experiments.

\vskip12pt
\noindent\textbf{Intuitive remarks on NN architecture performance:} 
    \begin{itemize}
        \item If we compare empirical, moments, and histogram implementation, we are expecting the empirical and moments to work better than histogram since for histogram approximation, the information we are using is the number of particles in the specific bins. However, when we use for example empirical approximation, we are using all the particle states as our input which holds more information. Furthermore, the performance of histogram approximation will highly depend on the number of bins and the range of the bins chosen. We believe this is the reason why empirical and moments work better in Figure~\ref{fig:primpact} (left) than the histograms.
        \item For histogram approximations (with FFNN vs CNN) and empirical approximations (with FFNN vs SYM), intuitively we expect the results to improve when we change the neural network architecture from FFNN to convolutional neural network (CNN) if we have histogram approximation and to symmetric neural network (SYM) if we have empirical approximation in complex applications. The reason for this is as follows: for histogram, we would like to keep the spatial dependencies and CNN helps us with this; for empirical, the order of the particle states should not be important and symmetric neural network helps us to implement this since it is invariant with respect to the permutations of the particle states. This is what we see in Figure~\ref{fig:crowdmotion} (left) when we handle a more complex model. 
    \end{itemize}
\clearpage
\noindent{\textbf{Computing resources:}}
The experiments are run on a HPC cluster with the following properties:
\begin{center}{\small
\begin{tabular}{ |l |l|}
\hline
    Model & Dell  \\ \hline
 CPU &  Intel Xeon Gold 6226 2.9 Gh \\  \hline
 Number of CPUs &  $2$ \\  \hline 
 Cores per CPU &    $16$\\  \hline 
 Total cores &  $32$ \\ \hline 
 Memory &   $192$ GB \\  \hline 
 Network &   EDR Infiniband \\  \hline 
\end{tabular}}
\end{center}

\section{Parameters}
\label{app:parameters}

The model parameters used in the numerical experiments can be found below in tables~\ref{table:sysrisk_coef}, \ref{table:primpact_coefs}, and \ref{table:crowmo_coefs}.
\begin{table}[h]
\caption{Parameters in the systemic risk MFC experiments with common noise\\}
\centering
{
\begin{tabular}{ccccccccc}
\toprule
 $T$ & $\Delta t$ & $\mu_0$ & $\rho$ & $a$ & $c$ & $q$ & $\varepsilon$ &$\sigma$\\
\midrule
$1.0$ & $0.01$ & $\mathcal{N}(1,0.1^2)$ & $0.1$ & $1.0$ & $1.0$ & $0.5$ & $10.0$ & 1.0 \\
\bottomrule
\end{tabular}}
\label{table:sysrisk_coef}
\end{table}

\begin{table}[h]
\caption{Parameters in the price impact MFC experiments\\}
\centering
{
\begin{tabular}{ccccccc}
\toprule
 $T$ & $\Delta t$  & $\mu^1_0$ & $\mu_0^2$ & $c_\alpha$ & $c_X$ & $c_g$  \\
\midrule
$1.0$ & $0.01$ & $\mathcal{N}(1,0.3^2)$ & $\mathcal{N}(2,1)$ & $2.0$ & $0.1$ & $0.3$  \\
\toprule
$h_1$ & $h_2$ & $x_0^1$ & $x_0^2$ & $\sigma_1$ & $\sigma_2$\\
\midrule
$1.0$ & $0.8$ &$\mathcal{N}(0,1)$ &$\mathcal{N}(0,1)$ &$0.7$ & $1.0 $\\
\bottomrule
\end{tabular}}
\label{table:primpact_coefs}
\end{table}

\begin{table}[h]
\caption{Parameters in the crowd motion MFC experiments\\}
\centering
{
\begin{tabular}{cccccccc}
\toprule
 $T$ & $\Delta t$  & $\mu^1_0$ & $\mu_0^2$ & $c_0$ & $c_1$ & $c_2$ \\
\midrule
$1.0$ & $0.01$ & $\mathcal{N}(0,0.1^2)$ & $\mathcal{N}(0,0.2^2)$ & $0.1$ & $0.0$ & $1.0$    \\
\toprule
 $c_3$ & $x_{target2}$ & $x_0^1$ & $x_0^2$  & $\sigma_1$ & $\sigma_2$\\
\midrule
$1.0$ & $[2,2]$ &$\mathcal{N}(0,1)$ &$\mathcal{N}(0,1)$&$ 0.7$ & $1.0$\\
\bottomrule
\end{tabular}}
\label{table:crowmo_coefs}
\end{table}

         \clearpage   

\section{Additional Experiment Results}

In this section, we give the results to the additional experiments. For the systemic risk model, we focus on FFNN with histogram approximation and we compare the effect of different learning rates ($10^{-2}$ vs. $10^{-3}$ vs. $10^{-4}$). In Figure~\ref{fig:sysrisk-additional-1}, we can see that when learning rate is equal to $10^{-4}$, the algorithm converges slower as expected but it still converges around the same loss level as with the other learning rates.

In the top plot of Figure~\ref{fig:primpact-additional}, we can see the states vs. state-dependent controls at different time points and at bottom, we can see the states vs.
population-dependent controls (specifically when we used FFNN with empirical approximation for distribution embedding). From the loss plots (Figure~\ref{fig:primpact} (left)), we know that the population-dependent controls perform better. 

In  Figure~\ref{fig:crowdmotion-additional}, we show the positions of the particles at 3 different time steps ($t=0.0, 0.5, 1.0$). The terminal target location is shown at the intersection of orange dashed lines and we can see that the crowd moves towards this target with time.

\begin{figure}[h]
\centering
\begin{subfigure}[t]{0.55\textwidth}
    \centering
    \includegraphics[width=\linewidth]{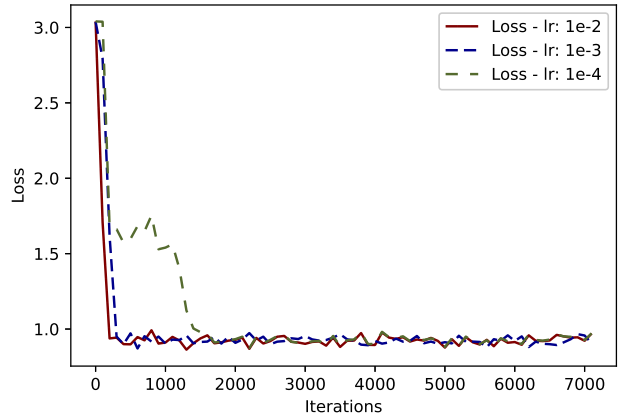}
\end{subfigure}
    \caption{Loss comparison of distribution approximation with histogram with different learning rates ($10^{-2}, 10^{-3}, 10^{-4}$) in the systemic risk experiment.\label{fig:sysrisk-additional-1}}
\end{figure}

\begin{figure}[h]
\centering
\vskip-3.3cm
\includegraphics[width=0.95\linewidth]{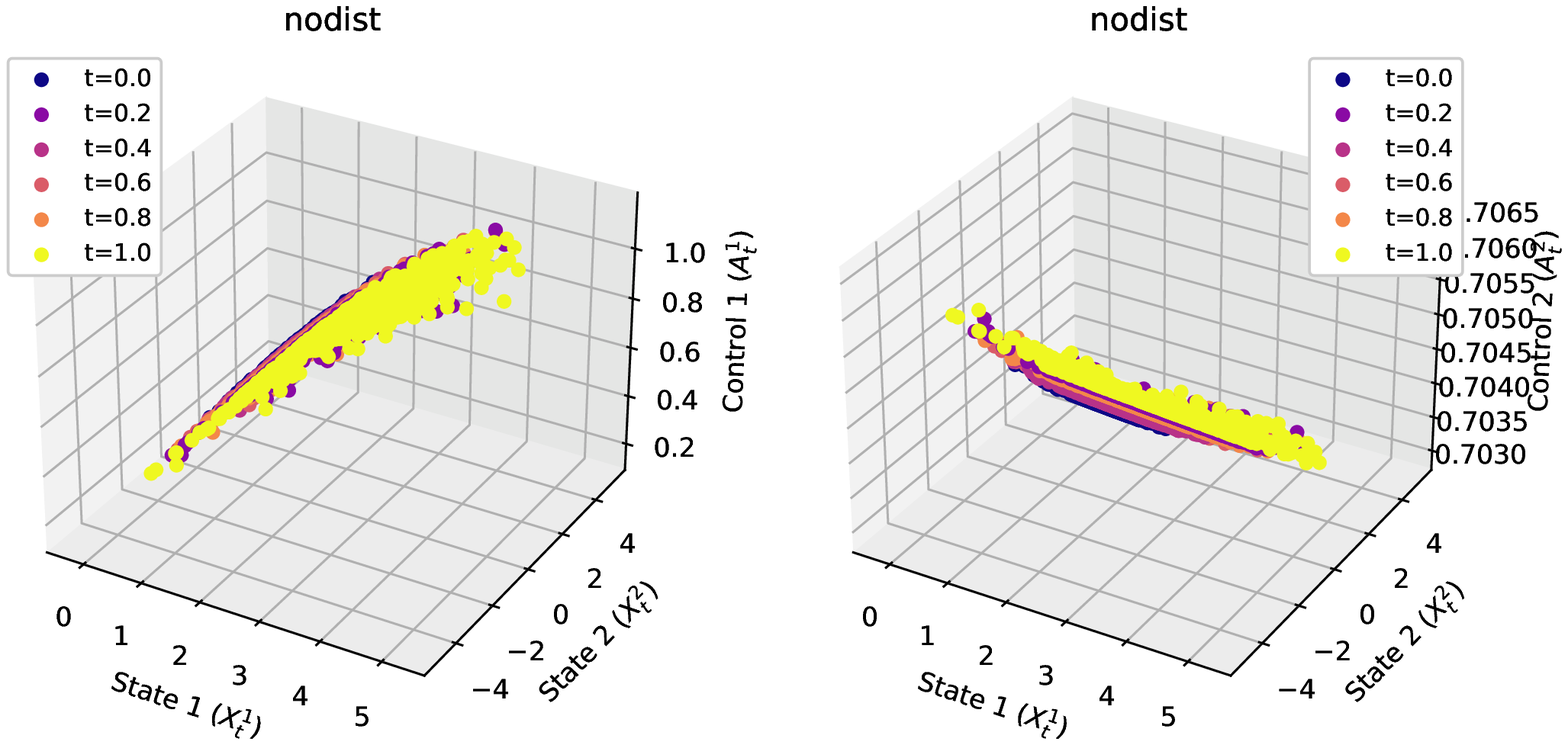}
\vskip-2.6cm
\includegraphics[width=0.95\linewidth]{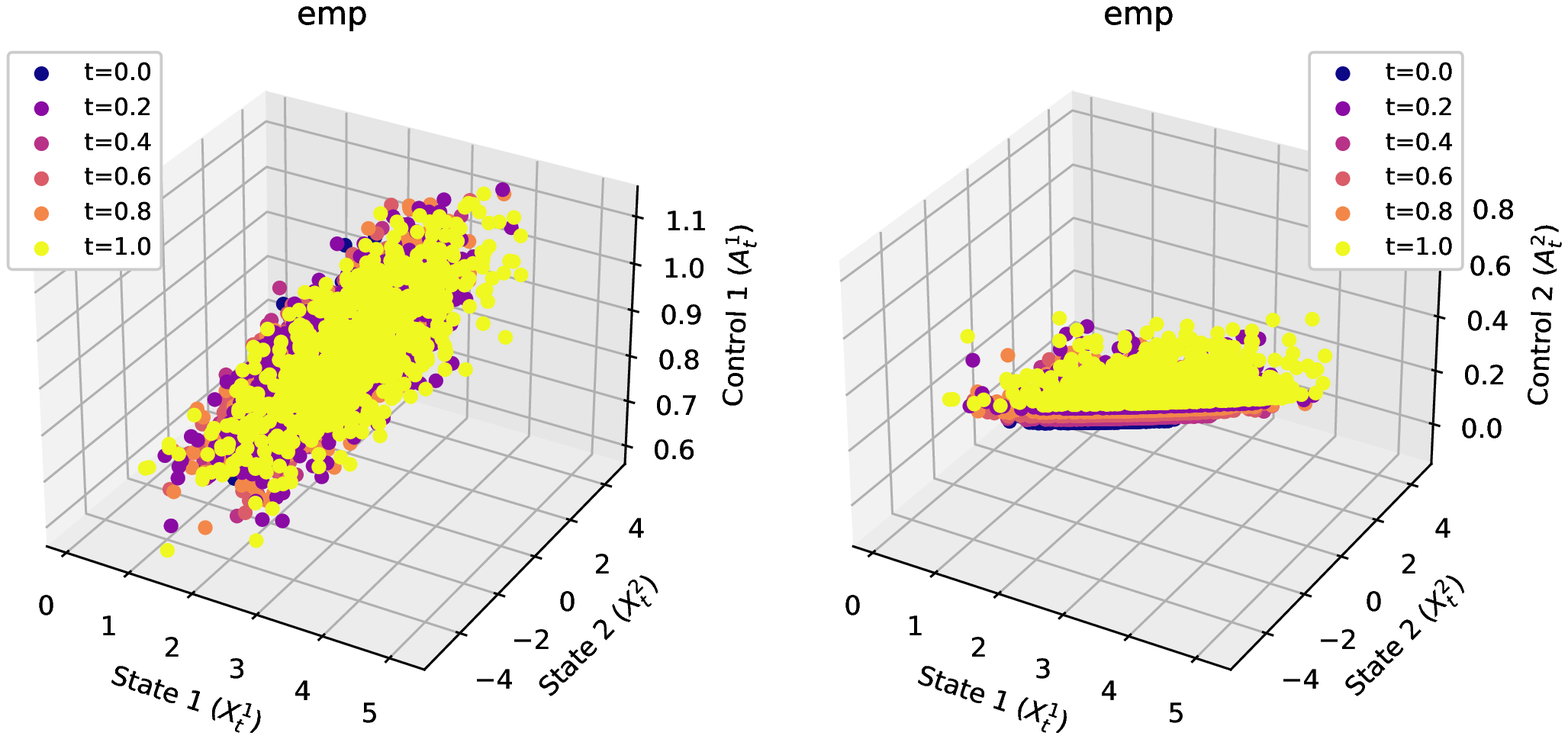}
\vskip-2cm
\caption{Comparison of control vs state plots. \textbf{Top:} where no distribution approximation is used as an input for control (i.e., state-dependent control) vs. \textbf{Bottom:} where empirical distribution approximation is used as an input for control (population-dependent control).}
\label{fig:primpact-additional}
\end{figure}

\begin{figure}[h]
\centering
\includegraphics[width=\linewidth]{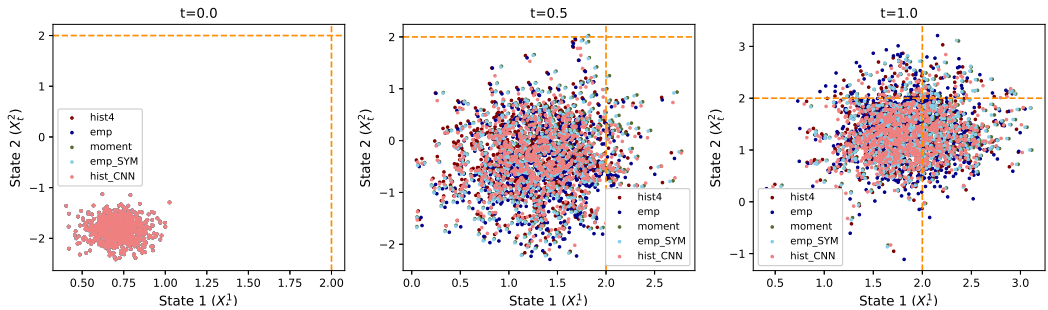}
\caption{Locations over time in the crowd motion example.}
\label{fig:crowdmotion-additional}
\end{figure}

\end{document}